\documentclass{article}

\usepackage{arxiv}

\usepackage[utf8]{inputenc} % allow utf-8 input
\usepackage[T1]{fontenc}    % use 8-bit T1 fonts
\usepackage{hyperref}       % hyperlinks
\usepackage{url}            % simple URL typesetting
\usepackage{booktabs}       % professional-quality tables
\usepackage{amsfonts}       % blackboard math symbols
\usepackage{nicefrac}       % compact symbols for 1/2, etc.
\usepackage{microtype}      % microtypography
\usepackage{lipsum}		% Can be removed after putting your text content
\usepackage{graphicx}
\usepackage{algorithmic}
\usepackage{doi}

\usepackage[section]{placeins}

\usepackage{textalpha}
\usepackage{amssymb}

\usepackage{longtable}
\usepackage[ruled,vlined]{algorithm2e}
\usepackage{pgfplots}
\pgfplotsset{compat=newest}
\usepgfplotslibrary{fillbetween}
\usepackage{textalpha}
\usepackage{xcolor}

\usepackage[T1]{fontenc} % Use 8-bit encoding that has 256 glyphs

\usepackage[utf8]{inputenc} % Required for including letters with accents

\usepackage{graphicx} % Required for including images
\graphicspath{{Figures/}} % Set the default folder for images

\usepackage{enumitem} % Required for manipulating the whitespace between and within lists

\usepackage{lipsum} % Used for inserting dummy 'Lorem ipsum' text into the template

\usepackage{subfig} % Required for creating figures with multiple parts (subfigures)

\usepackage{amsmath,amsthm} % For including math equations, theorems, symbols, etc

\usepackage{varioref} % More descriptive referencing

\usepackage{tikz}
%\usepackage{algorithm,algorithmic}
%New Commands
\usetikzlibrary{arrows,shapes,positioning,calc,3d}
\usepackage{longtable}
\usepackage{pgfplots}
\pgfplotsset{compat=newest}
\usetikzlibrary{pgfplots.fillbetween}
\usepgfplotslibrary{fillbetween}
\usepackage{placeins}
\theoremstyle{definition} % Define theorem styles here based on the definition style (used for definitions and examples)

\theoremstyle{plain} % Define theorem styles here based on the plain style (used for theorems, lemmas, propositions)
\newtheorem{theorem}{Theorem}

\theoremstyle{remark} % Define theorem styles here based on the remark style (used for remarks and notes)
\newtheorem*{remark}{Remark}

\newcommand{\R}{\mathbb{R}}

\newcommand{\x}{x}

\newcommand{\J}{J}

\newcommand{\eps}{\epsilon}
\newcommand{\Omeps}{\Omega_{\eps}}
\newcommand{\Om}{\Omega}

\newcommand{\Pv}{\vec{P}}
  %complex numbers
\newcommand{\nv}{\vec{n}}
\newcommand{\Vv}{\vec{V}}

\newcommand{\Qv}{\vec{Q}}
\newcommand{\rv}{\vec{r}}
\newcommand{\nab}{\nabla}

\newcommand{\Ga}{\Gamma_1}
\newcommand{\Gb}{\Gamma_2}
\newcommand{\Gc}{\Gamma_2}
\newcommand{\Gd}{\Gamma_2}
\newcommand{\Ge}{\Gamma_3}
\newcommand{\Lag}{\mathcal{L}}

\newcommand*\diff{\mathop{}\!\mathrm{d}}

\pgfdeclarelayer{ft}
\pgfdeclarelayer{bg}
\pgfsetlayers{bg,main,ft}
\newcommand\restr[2]{{% we make the whole thing an ordinary symbol
		\left.\kern-\nulldelimiterspace % automatically resize the bar with \right
		#1 % the function
		\vphantom{\big|} % pretend it's a little taller at normal size
		\right|_{#2} % this is the delimiter
}}

% useful math symbols %complex numbers
  %natural numbers
  %integer numbers
  %a field K
  %quaternions

\newcommand{\expnumber}[2]{{#1}\mathrm{e}{#2}}
\title{Shape Optimization for the Mitigation of Coastal Erosion via Porous Shallow Water Equations}

%\date{September 9, 1985}	% Here you can change the date presented in the paper title
%\date{} 					% Or removing it

\author{ \href{https://orcid.org/0000-0001-9930-065X}{\includegraphics[scale=0.06]{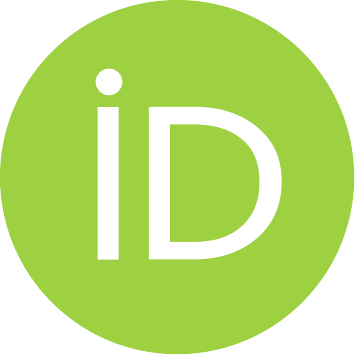}\hspace{1mm}Luka Schlegel} \\
	Department of Mathematics\\
	Universität Trier\\
	Universitätsring 15, 54296 Trier\\
	\texttt{schlegel@uni-trier.de} \\
	%% examples of more authors
	\And
	\href{https://orcid.org/0000-0001-7665-130X}{\includegraphics[scale=0.06]{orcid.pdf}\hspace{1mm}Volker Schulz} \\
	Department of Mathematics\\
	Universität Trier\\
	Universitätsring 15, 54296 Trier\\
	\texttt{volker.schulz@uni-trier.de} \\
	%% \AND
	%% Coauthor \\
	%% Affiliation \\
	%% Address \\
	%% \texttt{email} \\
	%% \And
	%% Coauthor \\
	%% Affiliation \\
	%% Address \\
	%% \texttt{email} \\
	%% \And
	%% Coauthor \\
	%% Affiliation \\
	%% Address \\
	%% \texttt{email} \\
}

% Uncomment to remove the date
%\date{}

% Uncomment to override  the `A preprint' in the header

%%% Add PDF metadata to help others organize their library
%%% Once the PDF is generated, you can check the metadata with
%%% $ pdfinfo template.pdf
\hypersetup{
pdftitle={Porous SWE Paper},
pdfsubject={q-bio.NC, q-bio.QM},
pdfauthor={Luka Schlegel, Volker Schulz},
pdfkeywords={Shape Optimization, Obstacle Problem,  Numerical Methods,  Adjoint Methods, Porous Shallow Water Equations, Coastal Erosion},
}

\begin{document}
\maketitle

\begin{abstract}
Coastal erosion describes the displacement of land caused by destructive sea waves, currents or tides. Major efforts have been made to mitigate these effects using groynes, breakwaters and various other structures. We address this problem by applying shape optimization techniques on the obstacles. We model the propagation of waves towards the coastline using two-dimensional porous Shallow Water Equations with artificial viscosity. The obstacle's shape, which is assumed to be permeable, is optimized over an appropriate cost function to minimize the height and velocities of water waves along the shore, without relying on a finite-dimensional design space, but based on shape calculus. 
\end{abstract}

% keywords can be removed
\keywords{Shape Optimization\and  Obstacle Problem\and  Numerical Methods\and  Adjoint Methods\and Porous Shallow Water Equations\and  Coastal Erosion}

\section{Introduction}
Coastal erosion describes the displacement of land caused by destructive sea waves, currents or tides. Major efforts have been made to mitigate these effects using groynes, breakwaters and various other structures. 
Among experimental set-ups to model the propagation of waves towards a shore and to find optimal wave-breaking obstacles, the focus has turned towards numerical simulations due to the continuously increasing computational performance. Calculating optimal shapes for various problems is a vital field, combining several areas of research. This paper builds up on the monographs \cite{Choi1987}\cite{Sokolowski1992}\cite{Delfour2011} to perform free-form shape optimization. In addition, we strongly orientate on \cite{Schulz2016}\cite{Schulz2014b} that use the Lagrangian approach for shape optimization, i.e. calculating state, adjoint and the deformation of the mesh via the volume form of the shape derivative assembled on the right-hand-side of the linear elasticity equation, as Riesz representative of the shape derivative. \\
Essential contributions to the field of numerical coastal protection have been made for steady \cite{Azerad2005}\cite{Mohammadi2008}\cite{Keuthen2015} and unsteady \cite{Mohammadi2011}\cite{Mohammadi2012}\cite{Schlegel20212} descriptions of propagating waves. In this paper we select one of the most widely applied system of wave equations. We describe the hydrodynamics by the set of Saint-Venant or better known as Shallow Water Equations (SWE), that originate from the famous Navier-Stokes Equations by depth-integrating, based on the assumption that horizontal length-scales are much larger than vertical ones \cite{SaintVenant1871}. To model a permeable obstacle, which can be exemplifying interpreted as a geotextile tube, a porosity parameter is introduced. Porous SWE models are being paid increasing attention throughout the last decade, mostly because its ability to perform large-scale urban flood modelling \cite{Dewals2021}. Over the years a variety of models have been introduced differing in terms of conceptual, mathematical and numerical aspects \cite{Soares2006}\cite{Sanders2008}\cite{Ozgen2016}.
Our model mainly builds up on \cite{Soares2006}, such that we are dealing with a single, depth-independent porosity parameter in the definition of the SWE. In addition, we restrict ourself to isotropic porosity effects, such that the parameter cannot account for directional effects, which forms a legitimate assumption for a geotextile obstacle.\\
We would like to highlight that porous SWE have been modelled mainly by techniques relying on constant cell approximations via Finite Volume schemes. In this paper we calculate and derive numerical solutions to porous SWE by high-order Discontinuous Galerkin (DG) methods. In this setting artificial viscosity is introduced to counter possible oscillations that can appear around a shock location and discretized using Symmetric Interior Penalty Discontinuous Galerkin (SIP-DG) \cite{Hartmann2008}. To deal with numerical difficulties, that arise due to the discontinuous material coefficient, we extend the notion of a well-balanced DG scheme for classical SWE with discontinuous sediment \cite{Wang2006} to porous, diffusive and two-dimensional SWE. In addition, it is noteworthy, that porous SWE have not been investigated in any kind of optimization yet, such that we firstly formulate adjoint and shape derivative for this set of equations and provide an algorithmic handle to this.\\
The paper is structured as follows: In Section \ref{sec:PF} we formulate the PDE-constrained optimization problem. In Section \ref{sec:DerSha} we derive the necessary tools to solve this problem, by deriving adjoint equations and the shape derivative in volume form. The final part, Section \ref{sec:NumRes}, will present numerical techniques and applications for a sample mesh such as a representative mesh for a real coastal section.

\section{Problem Formulation}\label{sec:PF}

Suppose we are given an open domain $\Om\subset\R^2$, which is split into the disjoint sets $\tilde{\Om},D\subset\Om$ such that $\tilde{\Om}\cup D\cup\Ge=\Om$ and $\Ga\cup\Gd=\partial\Om(=:\Gamma_{out})$. 
We assume the variable, interior boundary and the fixed outer $\partial\Om$ to be at least Lipschitz. One simple example of such kind is visualized below in Figure \ref{fig: domain}.

\begin{figure}[htb!]
	\centering
	\begin{tikzpicture}
	\begin{scope}
	\clip (0,0) rectangle (5,2.5);
	\draw (2.5,0) circle(2.5);
	\end{scope}

	\draw[->, >=stealth', shorten >=1pt] (2.33,0.7)   -- (2.0,1.1);
	\draw (0,0) --(5,0);
	\draw (2.5,0.5) circle (0.25);
	
	\node (A) at (3.5,1.2) {\large $\tilde{\Om}$};
	\node (B) at (2.5,-0.3) {$\Ga$};
	\node (E) at (2.5,2.8) {$\Gd$};
	\node (F) at (3.1,0.5) {$\Ge$};
	\node (G) at (2.5,0.5) {$D$};
	\node (H) at (2.05,0.7) {$\nv$};
	\end{tikzpicture}
	\caption[Illustrative Domain]{Illustrative Domain $\Om$ with Initial Circled Obstacle $D$ and Boundaries $\Ga,\Gd$ and $\Ge$}
	\label{fig: domain}
\end{figure}

On this domain we model water wave and velocity fields as the solution to porous SWE with artificial viscosity. We interpret $\Ga,\Gd,\Ge$ as coastline, open sea and obstacle boundary and solve on $\Om\times(0,T)$
\begin{equation}
\begin{aligned}
&\partial_t(\phi U)+\nabla\cdot{(\phi F(U))}-\nab\cdot(G(f(\phi,\mu))\nab \hat{U})=\phi S(U)+S_\phi(U)\text{,}
\label{Eq:SWE}
\end{aligned}
\end{equation}
where we are given the SWE in vector notation with flux matrix 
\begin{equation}
\begin{aligned}
F(U)=\begin{pmatrix}
\Qv\\ 
\frac{\Qv}{H}\otimes \Qv+\frac{1}{2}gH^2\mathbf{I}_2
\end{pmatrix}
=
\begin{pmatrix}
Hu & vH\\	
Hu^2+\frac{1}{2}gH^2 & Huv \\
Huv & Hv^2 + \frac{1}{2}gH^2
\end{pmatrix}
\end{aligned}
\label{Eq:2Fluxes}
\end{equation}
for identity matrix $\mathbf{I}_2\in\R^{2\times2}$ and solution $U:\Om\times(0,T)\rightarrow\R\times\R^2$, where for simplicity the domain and time-dependent components are denoted by $U=(H,\Qv)=(H,Hu,Hv)$, with $H$ being the water height and $Hu,Hv$ the weighted horizontal and vertical discharge or velocity. For notational ease, we set $\hat{U}=(H+z,\Qv)$ for constant sediment height $z:\Om\times(0,T)\rightarrow\R$. The porosity is a scalar function $\phi:\Om\times(0,T)\rightarrow(0,1]$ representing the respective portion of space that is available to the flow. We define
\begin{align}
\phi\equiv\begin{cases}
\phi_1=\text{const.} \text{ in } \tilde{\Om}\times(0,T)\\
\phi_2=\text{const.} \text{ in } D\times(0,T)\text{.}
\end{cases}
\end{align}
The setting can be taken from Figure \ref{fig: Sketch}, where the region with varying porosity factor on $D$ is exemplifying highlighted in grey.
\begin{figure}[htb!]
\centering
\begin{tikzpicture}[scale=0.5, transform shape,x=0.5cm,y=0.5cm,z=0.3cm,>=stealth,grid/.style={very thin,gray}]

% The axes
\draw[->] (xyz cs:x=0) -- (xyz cs:x=13.5) node[above] {$x$};
\draw[->] (xyz cs:y=0) -- (xyz cs:y=13.5) node[right] {$z$};
\draw[->,opacity=0.4] (xyz cs:z=0) -- (xyz cs:z=13.5) node[above] {$y$};
% The thin ticks
\foreach \coo in {0,1,...,13}
{
	\draw (\coo,-1.5pt) -- (\coo,1.5pt);
	\draw (-1.5pt,\coo) -- (1.5pt,\coo);
	\draw[opacity=0.4] (xyz cs:y=-0.15pt,z=\coo) -- (xyz cs:y=0.15pt,z=\coo);
}
% The thick ticks
\foreach \coo in {0,5,10}
{
	\draw[thick] (\coo,-3pt) -- (\coo,3pt) node[below=6pt] {\coo};
	\draw[thick] (-3pt,\coo) -- (3pt,\coo) node[left=6pt] {\coo};
	\draw[thick,opacity=0.4] (xyz cs:y=-0.3pt,z=\coo) -- (xyz cs:y=0.3pt,z=\coo) node[below=8pt] {\coo};
}

% Now the waves
\draw[name path = A, black] (0,7) sin (2,8) cos(4,7) sin(6,6) cos(8,7) sin(10,8) cos(12,7);
\draw[name path = B, black] (0,7,4) sin (2,8,4) cos(4,7,4) sin(6,6,4) cos(8,7,4) sin(10,8,4) cos(12,7,4);
\draw[name path = C,black] (0,7) -- (0,7,4);
\draw[name path = D,black] (12,7) -- (12,7,4);
%\draw[blue] (0,7) sin (2,9);
\tikzfillbetween[of=A and B]{blue, opacity=0.3};

% Now the Sediment lower Level
\draw[name path = E, black] (0,0) -- (12,0);
\draw[name path = F, black,dashed,opacity=0.5] (0,0,4) -- (12,0,4);
\draw[name path = G,black,dashed,opacity=0.5] (0,0) -- (0,0,4);
\draw[name path = H,black] (12,0) -- (12,0,4);
%\tikzfillbetween[of=E and F]{brown, opacity=0.3};

% Sediment Upper Level
\draw[name path = I, black] (0,3) sin(2,2) cos(6,3) sin(10,4) cos(12,3);
\draw[name path = J, black] (0,3,4) sin(2,2,4) cos(6,3,4) sin(10,4,4) cos(12,3,4);
\draw[name path = K,black] (0,3) -- (0,3,4);
\draw[name path = L,black] (12,3) -- (12,3,4);
%\draw[blue] (0,7) sin (2,9);
\tikzfillbetween[of=I and J]{brown, opacity=0.3};
\tikzfillbetween[of=E and I]{brown, opacity=0.3};
%\tikzfillbetween[of=H and L]{brown, opacity=0.3};
\tikzfillbetween[of=A and I]{blue, opacity=0.1};
\tikzfillbetween[of=L and D]{blue, opacity=0.1};
\tikzfillbetween[of=H and L]{brown, opacity=0.3};

% Outer Edges
\draw[ black] (12,0) -- (12,7);
\draw[ black] (12,0,4) -- (12,7,4);
\draw[ black, dashed,opacity=0.5] (0,0,4) -- (0,7,4);

\draw [decorate,ultra thick,
decoration = {brace}] (12,3,2) -- (12,0,2) 
node[pos=0.5,right=4pt,black,thick]{$\mathbf{z}$} ;
\draw [decorate,ultra thick,
decoration = {brace}] (12,7,2) -- (12,3,2) 
node[pos=0.5,right=4pt,black,thick]{$\mathbf{H}$} ;

\tikzset{zxplane/.style={canvas is zx plane at y=#1,very thin}}
\tikzset{yxplane/.style={canvas is yx plane at z=#1,very thin}}

  \begin{scope}[canvas is zx plane at y=0]
\draw[name path = M,fill={rgb:black,1;white,2}] (2,8) circle (1cm);
%\node[black,thick] at (2,8) {\rotatebox{90}{\scalebox{7}[-1]{$\Large\mathbf{\phi_2}$}}};
%\node[black,thick] at (2,4) {\rotatebox{90}{\scalebox{7}[-1]{$\Large\mathbf{\phi_1}$}}};
\end{scope}

\end{tikzpicture}	
	\caption[Sketch of Wave and Sediment with Porous Region]{Cross-Section for Identification of Wave Height $H$, Sediment Height $z$ and Material Coefficient}
	\label{fig: Sketch}
\end{figure}

We define the first source term in (\ref{Eq:SWE}) as
\begin{equation}
\begin{aligned}\
S(U)=
\begin{pmatrix}
0\\
-gH\frac{\partial z}{\partial x}\\
-gH\frac{\partial z}{\partial y}
\end{pmatrix}
\end{aligned}\text{,}
\label{Eq:3Sources}
\end{equation}
responding to variations in the bed slope. In addition, the parameter $g$ represents the gravitational acceleration.
The second source term in (\ref{Eq:SWE}) corresponds to variations in the porosity coefficient and is chosen as \cite{Soares2006}
\begin{align}
S_\phi(U)=\begin{pmatrix}
0\\
g\frac{H^2}{2}\frac{\partial \phi}{\partial x}\\
g\frac{H^2}{2}\frac{\partial \phi}{\partial y}
\end{pmatrix}\text{.}
\end{align}
For the SWE we employ outer boundary conditions as rigid-wall and open sea boundary conditions for $\Ga$ and $\Gd$ as
\begin{equation}
\begin{aligned}
& \Qv\cdot \nv=0, \nab (H+z)\cdot \nv=0,\nab Q_1 \cdot \nv=0, \nab Q_2\cdot \nv=0 &\text{ on }& \Ga\times(0,T)&\\
&H=H_1, \nab Q_1\cdot \nv=0, \nab Q_2\cdot \nv=0 &\text{ on }&\Gd\times(0,T)&
\label{Eq:BCSWE}
\end{aligned}
\end{equation}
and transmissive interface conditions on $\Ge\times(0,T)$ for the continuity of the state
\begin{align} 
[\![H+z]\!]&=0
\label{BCSWEfirst}\\
[\![Q_1]\!]&=0\\
[\![Q_2]\!]&=0
\label{BCSWEfirst_2}
\end{align}
the diffusive flux
\begin{align} 
[\![\nab (H+z)\cdot\nv]\!]&=0\\
[\![\phi \nab (Q_1)\cdot\nv]\!]&=0\\
[\![\phi \nab (Q_2)\cdot\nv]\!]&=0
\end{align}
and the advective flux
\begin{align}
[\![\phi F(U)\cdot\nv]\!] = 0
\end{align}	
i.e.
\begin{align} 
[\![\phi \Qv\cdot\nv]\!]&=0\\
[\![\{\phi Q_1^2/H+1/2g\phi H^2; \phi Q_1Q_2/H\}\cdot\nv]\!]&=0\\
[\![\{\phi Q_1Q_2/H;\phi Q_2^2/H+1/2g\phi H^2\}\cdot\nv]\!]&=0
\label{Eq:BCSWElast}
\end{align}
for jump symbol on the interface $\Ge$ defined by $[\![H]\!]:=\restr{H}{\tilde{\Om}}-\restr{H}{\tilde{D}}$.
In addition, we prescribe to be determined initial conditions on $\Om\times\{0\}$ as
\begin{align}
U=U_0 \text{.}
\end{align}

\begin{remark}
	To prevent shocks or discontinuities and associated local oscillations that can appear in the original formulation of the hyperbolic SWE even for continuous data in finite time, diffusive terms are added in (\ref{Eq:SWE}) such that we obtain a set of fully parabolic equations. We control the amount of added diffusion by diagonal matrix $G(f(\phi,\mu))=\sum_{i=1}^ne_i^Tf(\phi,\mu) e_i e_i^T$ with entries $f(\phi,\mu)=(\mu_v,\phi\mu_{f}, \phi\mu_{f})\in\R_+^3$ and basis vector $e_i\in\R^n$ with $n$ being the number of dimensions in vector $f(\phi,\mu)$. In this setting $\mu_f$ is fixed to a small value, while we rely on shock detection in the determination of $\mu_v$ following \cite{Persson2006}. We refer to Section \ref{sec:exhalfcircled} and \ref{sec:mentawai} for more detailed information.
\end{remark}
\begin{remark}
	For classical SWE a physical interpretation can be obtained for the introduction of the viscous part in the conservation of momentum equation \cite{Oliger1978}. So far only porous SWE, without additional viscous terms, have been introduced in the literature, hence the usage is justified in Appendix \ref{app:ViscousPorMom}. Instead of the derived non-linear formulation we will work with linear diffusion in the sense of artificial viscosity, that is for stability also placed on the continuity equation. In this setting we follow the justification as in \cite{Guba2014}.  We would like to highlight that adjoint-based shape optimization for non-linear diffusion can be handled in the same way, leading to additional terms in the adjoint equations and the shape derivative. 
\end{remark}
\begin{remark}
A constant porosity coefficient $\phi_1=\phi_2$ in (\ref{Eq:SWE}) leads to SWE in the classical form, that are subject for adjoint-based shape optimization in \cite{Schlegel20212}. A detailed explanation for associated boundary conditions in the inviscid case can be found e.g. in \cite{Song2017}.
\end{remark}

We finally obtain a PDE-constrained optimization problem by constraining objective
\begin{align}
	J(\Om)=J_1(\Om)+J_2(\Om)+J_3(\Om)+J_4(\Om)\text{.}
\label{Eq:Obj_sum}
\end{align}
Here we try to meet certain predefined wave height and velocities $\bar{U}$ at the shore $\Ga$ weighted by diagonal matrix $N\in\R^{3\times3}$, such that we minimize objective $\J_1:\Om\rightarrow\mathbb{R}$, where
\begin{equation}
\begin{aligned} J_1(\Om)=&\int_{0}^{T}\int_{\Ga}\frac{1}{2}||N(\hat{U}(t,x)-\bar{U}(t,x))||_2^2\diff s\diff t\text{.}
\end{aligned}
\label{Eq:1Obj}
\end{equation}
This objective is supplemented by a volume penalty, which hinders the obstacle from becoming arbitrarily large
\begin{align}
J_2(\Om)=\nu_2\int_{D}1\diff x\text{,}
\end{align}
a perimeter regularization to ensure a sufficient regularity at obstacle level on $\Gc$, which lets us define necessary normal vectors, i.e.
\begin{align}
J_3(\Om)=\nu_3\int_{\Gamma_{3}}1\diff s
\label{Eq:1Peri}
\end{align}
and lastly a thickness control following \cite{Allaire2016}
\begin{align}
J_4(\Om)=\nu_4\int_{\Gamma_3}\int_0^{d_{min}}\left[(d_{\Om}\left(x-\xi\vec{n}(x)\right))^+\right]^2\diff\xi \diff s\text{.}
\label{Eq:1Thick}
\end{align}
Here $d_\Om$ represents the signed distance function with value 
\begin{align}
d_{\Om}(x)=\begin{cases}
d(x,\partial\tilde{\Om})\quad &\text{ if } x\in \tilde{\Om}\\
0 \quad &\text{ if } x\in \partial\tilde{\Om}
\\
-d(x,\partial\tilde{\Om})\quad &\text{ if } x\in \tilde{\Om}^c\text{,}
\end{cases}
\end{align}
where the Euclidian distance of $x\in\R^d$ to a closed set $K\subset\R^d$ is defined as
\begin{align}
d(x,K)=\min_{y\in K}||x-y||_2
\label{Eq:6EuclMin}
\end{align}
for Euclidian distance $||.||_2$. The three penalty terms are controlled by parameters $\nu_2,\nu_3$ and $\nu_4$, which need to be defined a priori (for further details cf. to Section \ref{sec:DerSha}).
\begin{remark}
The first objective (\ref{Eq:1Obj}) is of tracking type \cite{Reyes2015} to aim for rest-conditions of the water. Regions with comparable properties are known to mitigate sediment transport, e.g. as it can be seen in a coupling with equations of Exner-type \cite{exner1925}. Alternatively to (\ref{Eq:1Obj}) the minimization of the mechanical energy of destructive sea-waves may lead to further insights \cite{Azerad2005}.
\end{remark}

\section{Derivation of the Shape Derivative}\label{sec:DerSha}
We now fix notations and definitions in the first part, before deriving the adjoint equations and shape derivatives in the second part, that are necessary to solve the PDE-constrained optimization problem.
\subsection{Notations and Definitions}
In this section we introduce a methodology that is commonly used in shape optimization, extensively elaborated in various works \cite{Choi1987}\cite{Sokolowski1992}\cite{Delfour2011}. We fix notations and definitions following \cite{Schulz2016} and amend whenever it appears necessary.
We start by introducing a family of mappings $\{\phi_\eps\}_{\eps\in[0,\tau]}$ for $\tau>0$ that are used to map each current position $\x\in\Om$ to another by $\phi_\eps(\x)$, where we choose the vector field $\Vv$ as the direction for the so-called perturbation of identity
\begin{align}
\x_\eps=\phi_\eps(\x)=\x+\eps \Vv(\x)\text{.}
\label{Eq:4poi}
\end{align}
According to this methodology, we can map the whole domain $\Om$ to another $\Omeps$ such that 
\begin{align}
\Omeps=\{\x_\eps|x+\eps \Vv(x),x\in\Om\}\text{.}
\label{Eq:5domain}
\end{align}
We define the Eulerian Derivative as 
\begin{align}
DJ(\Om)[\Vv]=\lim_{\eps\rightarrow 0^+} \frac{J(\Om_\eps)-J(\Om)}{\eps}\text{.}\label{Eq:7EulerDer}
\end{align}
Commonly, this expression is called shape derivative of $J$ at $\Om$ in direction $\Vv$ and in this sense $J$ shape differentiable at $\Om$ if for all directions $\Vv$ the Eulerian derivative exists and the mapping $\Vv\mapsto DJ(\Om)[\Vv]$ is linear and continuous.
In addition, we define the material derivative of some scalar function $p:\Om\rightarrow\R$ at $x\in\Om$ by the derivative of a composed function $p_\eps\circ\phi_\eps:\Om\rightarrow\Om_\eps\rightarrow\R$ for $p_\eps:\Om_\eps\rightarrow\R$ as
\begin{align}
D_m p(x):=\lim_{\eps\rightarrow0^+}\frac{p_\eps\circ \phi_\eps(x)-p(x)}{\eps}=\frac{d}{d\eps}\restr{(p_\eps\circ \phi_\eps)(x)}{\eps=0^+}\label{Eq:8MatDer}
\end{align} 
and the corresponding shape derivative for a scalar $p$ and a vector-valued $\Pv$ for which the material derivative is applied component-wise as
\begin{align}
Dp[\Vv]:=D_mp-\Vv\cdot\nab p \label{Eq:9MatDer2}\\
D\Pv[\Vv]:=D_m\Pv-\Vv^T\nab \Pv \label{Eq:9MatDer2vec}\text{.}
\end{align}
In the following, we will use the abbreviation $\dot{p}$ and $\dot{P}$ to mark the material derivative of $p$ and $P$. In Section \ref{sec:DerSha} we will make use of the following calculation rules\cite{Berggren2010}
\begin{align}
D_m(pq)&=D_mpq+pD_mq\label{Eq:10MatProdR}\\
D_m\nab p&=\nab D_mp-\nab \Vv^T\nab p\label{Eq:11MatGradR}\\
D_m\nab \Pv&=\nab D_m\Pv-\nab \Vv^T\nab \Pv\label{Eq:11MatGradRvec}\\
D_m(\nab q^T\nab p)&=\nab D_mp^T\nab q-\nab q^T(\nab \Vv+\nab \Vv^T)\nab p+\nab p^T\nab D_mq\text{.} \label{Eq:12MatGradProdR}
\end{align}
The basic idea in the derivation of the shape derivative in the next section will be to pull back each integral defined on the on the transformed field back to the original configuration. Hence, we need to state the following rule for differentiating domain integrals  \cite{Berggren2010}
\begin{align}
	\frac{d}{d\eps}\restr{\left(\int_{\Om_{\eps}}p_\eps\right)}{\eps=0^+}=\int_\Om(D_mp+\nab\cdot \Vv p) \label{Eq:13DoaminR}\text{.}
\end{align}

\subsection{Shape Derivative}
We compute the adjoint equations and the shape derivative of the PDE-constrained optimization problem by formulating the Lagrangian
\begin{align}
\mathcal{L}(\Om,U,P) = J_1(\Om)+a(U,P)-b(P)\text{,} \label{Eq:16Lag}
\end{align}
where $J_1$ is objective (\ref{Eq:1Obj}), and $a(U,P)$ and $b(P)$ are obtained from boundary value problem (\ref{Eq:SWE}).
Here, we rewrite the equations in weak form by multiplying with some arbitrary test function $P\in H^1(\Om\times(0,T))^3$ obtaining the form $a(U,P)=a(H,\Qv,p,\rv)$, which is defined as
\begin{equation}
\begin{aligned}
a(H,\Qv,p,\rv):=&\int_0^T\int_\Om\left[\frac{\partial \phi H}{\partial t}+\nab\cdot(\phi\Qv)\right]p\diff x\diff t+\\
%---------------------------------
&\int_0^T\int_\Om\mu_v\nab (H+z)\cdot\nab p\diff x\diff t-\int_0^T\int_{\Ge}[\![\mu_v\nab(H+z)\cdot\nv p]\!]\diff s\diff t-\\
%---------------------------------
&\int_0^T\int_{\Gd}\mu_v\nab(H_1+z)\cdot\nv p\diff s\diff t+\\
%---------------------------------
&\int_0^T\int_\Om\left[\frac{\partial \phi\Qv}{\partial t}+\nab\cdot\left(\phi\frac{\Qv}{H}\otimes \Qv+\frac{1}{2}g\phi H^2\mathbf{I}_2\right)\right]\cdot\rv\diff x\diff t+\\
%---------------------------------
&\int_0^T\int_\Om \mu_f\phi\nab\Qv:\nab\rv\diff x\diff t-\int_0^T\int_{\Ge}[\![\phi\mu_f\nab\Qv\cdot\nv\cdot\rv]\!]\diff s\diff t+\\
&\int_0^T\int_\Om g\phi H\nab z\cdot\rv\diff x\diff t-
\int_0^T\int_\Om g\frac{H^2}{2}\nab \phi\cdot\rv\diff x\diff t
\label{Eq:17aweak}
\end{aligned}
\end{equation}
and a zero perturbation term.
\begin{remark}
	To deal with well-defined weak forms and to allow us to perform adjoint-based sensitivity analyses we assume the flow to be free of discontinuities, e.g. induced by a discontinuous bottom profile $z$ or wave height $H$. In addition, we need to employ a specific handle to the discontinuous porosity coefficient. In this paper we have used the strategy to write each integral over $\Omega$ as the sum over subdomains $\int_\Om=\int_{\tilde{\Om}}+\int_{D}$. In (\ref{Eq:17aweak}) and in what follows this decomposition is assumed.
\end{remark}
\begin{remark}
	For the discontinuous coefficient we could rely on a smoothed porosity controlled by $\alpha>0$, i.e. $\phi=\lim_{\alpha\rightarrow0}\phi_\alpha$, e.g. by using smoothed cell transitions or mollifiers. In this setting we could integrate over the whole domain $\Om$. Such a handle would call for the necessity to show convergence results for state, adjoint and shape derivative. Furthermore, we remark that a smoothing approach is presented in one dimension in Appendix \ref{app:NumericalConvAlpha}, where we have used a smoothed step-function. Here interface conditions would not be required in the continuous form.
\end{remark}

We obtain state equations from differentiating the Lagrangian w.r.t. $P$ and the auxiliary problem, the adjoint equations, from differentiating the Lagrangian with respect to the states $U$.
The adjoint is formulated in the following theorem:  
\begin{theorem}\label{AdjointTheo}
	(Adjoint) Assume that the parabolic PDE problem (\ref{Eq:SWE}) is $H^1$-regular, so that its solution $U$ is at least in $H^1(\Om\times(0,T))^3$. Then the adjoint in strong form with solution $P=(p,\rv)\in H^1(\Om\times(0,T))^3$ is given by
	\begin{equation}
	\begin{aligned}
	\phi\Big[-\frac{\partial p}{\partial t}+\frac{1}{H^2}(\Qv\cdot\nab)\rv\cdot\Qv-gH(\nab\cdot\rv) +g\nab z\cdot \rv\Big]-&\\
	\nab\cdot(\mu_v\nab p)-gH\nab\phi\cdot\rv&=-N_{11}((H+z)-\bar{H})_{\Ga}
	\end{aligned}
	\label{Eq:19Adjoint1}
	\end{equation}
	and
	\begin{equation}
	\begin{aligned}
	\phi\Big[-\frac{\partial\rv}{\partial t}-\nab p-\frac{1}{H}(\Qv\cdot\nab)\rv-\frac{1}{H}(\nab\rv)^T\Qv\Big]-&\\
	\nab\cdot(\mu_f\phi \nab\rv)&=-G(N_{22,33})(\Qv-\bar{\Qv})_{\Ga}
	\end{aligned}
	\label{Eq:19Adjoint2}
	\end{equation}
	with outer boundaries
	\begin{equation}
	\begin{aligned}
	p&=0 \quad & \text{ in }&&\Om&\times\{T\}&&\\
	\rv&=0 \quad & \text{ in }&&\Om&\times\{T\}&&\\
	\rv\cdot \nv=0, \nab p\cdot \nv=0,\nab r_1\cdot\nv=0, \nab r_2\cdot\nv&=0 \quad & \text{ on }&&\Ga&\times(0,T)&&\\
	\phi p\nv+\frac{\phi}{H_1}(\Qv\cdot\nv)\rv+\frac{\phi}{H_1}(\Qv\rv) \cdot \nv=0,\nab r_1\cdot\nv=0, \nab r_2\cdot\nv&=0 \quad & \text{ on }&&\Gd&\times(0,T)&&
	\end{aligned} 
	\label{Eq:20AdjointBC}
	\end{equation}
	and interface boundaries on $\Ge$ as
	\begin{align} 
	[\![p]\!]&=0\\
	[\![\rv]\!]&=0
	\end{align}
	such as
	\begin{align} 
	[\![\nab p\cdot\nv]\!]&=0\\
	[\![\phi\nab r_1\cdot\nv]\!]&=0\\
	[\![\phi\nab r_2\cdot\nv]\!]&=0
	\end{align}
	and
	\begin{align}
	[\![\phi F_U(P)\cdot\nv]\!] =0
	\end{align}	
	i.e.
	\begin{align} 
	[\![\phi(\frac{\Qv}{H^2}\cdot\rv\Qv+gH\rv)\cdot \nv]\!]&=0\\
	[\![\phi (p+2Q_1/Hr_1+Q_2/Hr_2;Q_2/Hr_1)\cdot\nv]\!]&=0\\
	[\![\phi(Q_1/Hr_1;p+2Q_2/Hr_2+Q_1/Hr_2)\cdot\nv]\!]&=0\text{.}
	\end{align}
\end{theorem}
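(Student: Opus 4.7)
The plan is to obtain (\ref{Eq:19Adjoint1})--(\ref{Eq:19Adjoint2}) together with their boundary and interface conditions from the first-order optimality condition $D_U\mathcal{L}(\Om,U,P)[\delta U]=0$ imposed for every admissible variation $\delta U=(\delta H,\delta\Qv)\in H^1(\Om\times(0,T))^3$ that vanishes at $t=0$ (since $U_0$ is prescribed) and satisfies $\delta H=0$ on $\Gd$ (since $H=H_1$ is Dirichlet). Differentiating $J_1$ is immediate and furnishes the right-hand-side forcings $-N_{11}((H+z)-\bar{H})|_{\Ga}$ and $-G(N_{22,33})(\Qv-\bar{\Qv})|_{\Ga}$, so the bulk of the argument is to integrate $D_U a(U,P)[\delta U]$ by parts in $t$ and in $x$ until $\delta U$ appears pointwise.

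Next I would linearize (\ref{Eq:17aweak}) term by term. Integration by parts in time on $\int\phi(\partial_t\delta H)p$ and $\int\phi(\partial_t\delta\Qv)\cdot\rv$ produces the contributions $-\phi\partial_t p$ and $-\phi\partial_t\rv$, and together with admissibility of $\delta U$ at $t=0$ forces the terminal conditions $p(T)=\rv(T)=0$. Linearizing the nonlinear momentum flux $\phi(\Qv\otimes \Qv/H+\tfrac{1}{2}gH^2\mathbf{I}_2)$ in $(\delta H,\delta\Qv)$, integrating by parts against $\rv$, and collecting coefficients of $\delta H$ and $\delta\Qv$ yields precisely $\phi[\tfrac{1}{H^2}(\Qv\cdot\nab)\rv\cdot \Qv-gH\nab\cdot\rv]$ and $-\phi[\tfrac{1}{H}(\Qv\cdot\nab)\rv+\tfrac{1}{H}(\nab\rv)^T\Qv]$. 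The bed-slope and porosity-gradient sources linearize to $g\phi\nab z\cdot\rv$ and $-gH\nab\phi\cdot\rv$ acting on $\delta H$, while IBP on the viscous terms gives $-\nab\cdot(\mu_v\nab p)$ and $-\nab\cdot(\mu_f\phi\nab\rv)$. Assembling the volume contributions recovers (\ref{Eq:19Adjoint1})--(\ref{Eq:19Adjoint2}).

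The boundary traces generated on $\Ga$ and $\Gd$ must vanish for every admissible $\delta U$. On $\Ga$ the primal conditions leave $\delta H$ free while enforcing $\delta\Qv\cdot\nv=0$ and vanishing normal gradients, so duality forces $\rv\cdot\nv=0$ together with the Neumann conditions for $p$ and the $r_i$. On $\Gd$, $\delta H=0$ eliminates its trace, whereas $\delta\Qv\cdot\nv$ remains arbitrary and collects the continuity trace $\phi p\,\delta\Qv\cdot\nv$ and the linearized momentum advective trace into the single mixed condition $\phi p\nv+\tfrac{\phi}{H_1}(\Qv\cdot\nv)\rv+\tfrac{\phi}{H_1}(\Qv\rv)\cdot\nv=0$, while the viscous traces deliver $\nab r_i\cdot\nv=0$.

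The main obstacle is the systematic bookkeeping on $\Ge$ caused by the discontinuity of $\phi$. Every spatial IBP must be performed on $\tilde{\Om}$ and $D$ separately, so traces on either side of $\Ge$ accumulate as jump brackets $[\![\cdot]\!]$. Requiring $D_U a[\delta U]=0$ for all admissible $\delta U$ satisfying the primal interface jumps (\ref{BCSWEfirst})--(\ref{Eq:BCSWElast}) forces the dual continuity $[\![p]\!]=[\![\rv]\!]=0$ and the stated flux jumps; in particular, writing the linearized advective interface contribution in the compact form $[\![\phi F_U(P)\cdot\nv\cdot\delta U]\!]$ and invoking $[\![\delta U]\!]=0$ peels off precisely the three component-wise jump conditions listed at the end of the theorem, while the viscous traces deliver $[\![\nab p\cdot\nv]\!]=0$ and $[\![\phi\nab r_i\cdot\nv]\!]=0$.
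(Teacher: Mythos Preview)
Your proposal is correct and follows essentially the same approach as the paper: both derive the adjoint by differentiating the Lagrangian (\ref{Eq:16Lag}) with respect to the state, integrating by parts in time and on the subdomains $\tilde{\Om}$ and $D$ separately, and then reading off the adjoint PDE, terminal, boundary, and interface conditions from the requirement that the resulting expression vanish for all admissible variations. The only cosmetic difference is the order of operations---the paper first integrates (\ref{Eq:17aweak}) by parts and then takes $\partial a/\partial H$, $\partial a/\partial\Qv$, whereas you linearize first and integrate by parts afterward---but the two routes are equivalent and produce the same volume, boundary, and jump terms.
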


\begin{proof}
	See Appendix \ref{app:derivadj}
\end{proof}
The porous SWE adjoint can be written in vector form as
\begin{align}
-\phi\frac{\partial P}{\partial t}+\phi AP_x+\phi BP_y+\phi CP-\nab\cdot(G(f(\phi,\mu))\nab P)=S\text{,} \label{Eq:AdjVectFormPor}
\end{align}
where
\begin{align}
A=\begin{pmatrix}
0&\frac{Q_1}{H^2}-gH&\frac{Q_1Q_2}{H^2}\\
-1&-2\frac{Q_1}{H}&-\frac{Q_2}{H}\\
0&0&-\frac{Q_1}{H}
\end{pmatrix},\quad	
B=\begin{pmatrix}
0&\frac{Q_1Q_2}{H^2}&\frac{Q_2^2}{H^2}-gH\\
0&-\frac{Q_2}{H}&0\\
-1&-\frac{Q_1}{H}&-2\frac{Q_2}{H}
\end{pmatrix}
\end{align}
and $C$ originates from variations in the sediment and the porosity such that
\begin{align}
C=\begin{pmatrix}
0&g\frac{\partial z}{\partial x}-g\frac{H}{\phi}\frac{\partial \phi}{\partial x}&g\frac{\partial z}{\partial y}-g\frac{H}{\phi}\frac{\partial \phi}{\partial y}\\
0&0&0\\
0&0&0
\end{pmatrix}\text{.}
\label{AdjVectorNotSource}
\end{align}
\begin{remark}
	In this paper, we will only consider the volume form of the shape derivative, which will be used to obtain smooth mesh deformations by a Riesz projection.
\end{remark}

\begin{theorem}\label{ShapeDerTheo}
	(Shape Derivative)
	Assume that the parabolic PDE problem (\ref{Eq:SWE}) is $H^1$-regular, so that its solution $U$ is at least in $H^1(\Om\times(0,T))^3$. Moreover, assume that
	the adjoint equations (\ref{Eq:AdjVectFormPor}) admit a solution $P\in H^1(\Om\times(0,T))^3$. Then the
	shape derivative of the objective $J_1$ at $\Om$ in the
	direction $\Vv$ is given by
	
	\begin{equation}
	\begin{aligned}
	DJ_1(\Om)[\Vv]=\int_{0}^{T}\int_{\Om}&\Big[-(\nab \Vv)^T:\nab (\phi\Qv) p - (\nab \Vv)^T:\nab \Qv\frac{\phi\Qv}{H}\cdot \rv- \\
	&(\nab \Vv\Qv\cdot\nab)\frac{\phi\Qv}{H}\cdot \rv -gH(\nab \Vv)^T\nab (\phi H) \cdot \rv-\\
	&\mu_v\nab (H+z)^T(\nab \Vv +\nab \Vv^T)\nab p - \\
	&\phi\mu_f \nab \Qv \nab \Vv:\nab \rv -\phi\mu_f \nab\Qv\nab \Vv^T:\nab\rv-  \\
	&g\phi H\nab \Vv^T\nab z\cdot\rv+\frac{1}{2}
	gH^2\nab \Vv^T\nab \phi\cdot\rv\\
	&div(\Vv)\{\frac{\partial \phi H}{\partial t}p+\nab\cdot (\phi\Qv) p+\frac{\partial \phi\Qv}{\partial t}\cdot \rv+\\
	& \phi(\Qv\cdot \nab)\frac{\Qv}{H}\cdot \rv +\nab\cdot (\phi\Qv)\frac{\Qv}{H}\cdot \rv +  \frac{1}{2}g\nab(\phi H^2)\cdot \rv +\\
	&g\phi H\nab z\cdot\rv+\mu_v\nab (H+z)\cdot\nab p +\\
	&\phi\mu_f \nab \Qv : \nab \rv -g\frac{1}{2}H^2\nab \phi\cdot\rv\}\Big]\diff x\diff t\text{.}
	\end{aligned}
	\label{Eq:21SD}
	\end{equation}
\end{theorem}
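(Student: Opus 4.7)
The plan is to exploit the Lagrangian structure exactly as in \cite{Schulz2016}. By standard Lagrangian calculus, once $U$ solves the state equation (so $a(U,P)-b(P)=0$ for every admissible $P$) and $P$ solves the adjoint (so that the contribution $a'_U(U,P)[\dot U]$ matching the derivative of $J_1$ in the direction of material derivatives vanishes), the Eulerian derivative of $J_1$ reduces to the partial shape derivative of $\mathcal{L}$: $DJ_1(\Omega)[\vec V]=\partial_\Omega \mathcal{L}(\Omega,U,P)[\vec V]$. In this identity we must therefore only differentiate $a(U,P)-b(P)$ with respect to the domain motion induced by the perturbation of identity $\phi_\eps=\mathrm{id}+\eps\vec V$, treating $U,P$ as transported fields and discarding the material-derivative contributions $\dot U,\dot P$ that get contracted against the state and adjoint operators.

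First I would pull every volume integral in $a(U,P)$ in \eqref{Eq:17aweak} back to the perturbed domain $\Omega_\eps$ (split, as the remark insists, into $\int_{\tilde\Omega_\eps}+\int_{D_\eps}$), apply the domain-differentiation rule \eqref{Eq:13DoaminR} in each subdomain, and then invoke the material-derivative product rule \eqref{Eq:10MatProdR} and the gradient transport rules \eqref{Eq:11MatGradR}--\eqref{Eq:12MatGradProdR}. Each term in $a$ splits into two groups: one group carries material derivatives $\dot H,\dot{\vec Q},\dot p,\dot{\vec r}$ and the geometric terms $\nabla\vec V^T\nabla(\cdot)$ produced by the gradient transport rules; the other group is the $\Div(\vec V)$-factor coming from the Jacobian expansion in \eqref{Eq:13DoaminR}. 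Grouping the $\dot U$ and $\dot P$ terms and noticing that they reassemble precisely the left-hand sides of the state and adjoint equations (tested against $P$ and $U$ respectively), these contributions cancel.

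What remains are, term by term in \eqref{Eq:17aweak}, the geometric contractions of the form $-(\nabla\vec V)^T\!:\!\nabla(\phi\vec Q)\,p$, $-(\nabla\vec V)^T\!:\!\nabla\vec Q\,\tfrac{\phi\vec Q}{H}\!\cdot\!\vec r$, the advective term $-(\nabla\vec V\vec Q\cdot\nabla)\tfrac{\phi\vec Q}{H}\!\cdot\!\vec r$, the pressure term $-gH(\nabla\vec V)^T\nabla(\phi H)\!\cdot\!\vec r$, the symmetric diffusive terms $-\mu_v\nabla(H+z)^T(\nabla\vec V+\nabla\vec V^T)\nabla p$ (which come from \eqref{Eq:12MatGradProdR}) and the analogous $-\phi\mu_f$ contributions on $\nabla\vec Q\!:\!\nabla\vec r$, plus the bed-slope and porosity-gradient pieces $-g\phi H\nabla\vec V^T\nabla z\cdot\vec r$ and $+\tfrac12 gH^2\nabla\vec V^T\nabla\phi\cdot\vec r$; these are exactly the first eight lines of \eqref{Eq:21SD}. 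The final $\Div(\vec V)$-block gathers, with its own sign, every surviving integrand of $a(U,P)$ before perturbation, which matches the last four lines of \eqref{Eq:21SD}. The $J_1$-term itself lives on $\Gamma_1\subset\partial\Omega$ which is held fixed under $\vec V$, so it produces no geometric contribution here.

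The main obstacle, and where I would be most careful, is the bookkeeping of the tensor contractions and the boundary/interface pieces. The diffusive terms in \eqref{Eq:17aweak} include boundary integrals on $\Gamma_2$ and interface integrals $[\![\,\cdot\,]\!]$ on $\Gamma_3$; after applying \eqref{Eq:13DoaminR} (in its surface-integral analogue) on $\Gamma_3$, the jump conditions on $p,\vec r$ from Theorem \ref{AdjointTheo} make the interface corrections cancel in pairs, so no surface term survives in the volume form \eqref{Eq:21SD}; similarly $\Gamma_2$ is fixed. Verifying that each $\dot U$/$\dot P$ regrouping really reproduces the operators in \eqref{Eq:19Adjoint1}--\eqref{Eq:19Adjoint2} (with the correct signs from integration by parts, in particular from the advective Jacobian $\nabla(\vec Q\otimes\vec Q/H)$ which produces both $\tfrac{1}{H}(\vec Q\cdot\nabla)\vec r$ and $\tfrac{1}{H}(\nabla\vec r)^T\vec Q$) is the one place where a careful term-by-term check is unavoidable, and the rest is a direct application of \eqref{Eq:10MatProdR}--\eqref{Eq:13DoaminR}.
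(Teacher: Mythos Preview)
Your proposal is correct and follows essentially the same route as the paper: both differentiate the Lagrangian via the domain-differentiation rule \eqref{Eq:13DoaminR}, expand with the material-derivative calculus \eqref{Eq:10MatProdR}--\eqref{Eq:12MatGradProdR}, regroup the $\dot U,\dot P$ contributions into the state and adjoint operators (which vanish at the saddle point), and eliminate the interface/boundary integrals via the jump conditions and the fact that $\vec V$ vanishes near the fixed outer boundaries. The only notable difference is that the paper explicitly invokes the Correa--Seeger theorem \cite{Correa1985} to justify $DJ_1(\Omega)[\vec V]=D\mathcal L(\Omega,U,P)[\vec V]$ at the saddle point, whereas you appeal to ``standard Lagrangian calculus''---same content, just a named reference for the interchange step.
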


\begin{proof}
	See Appendix \ref{app:derivsha}
\end{proof}

The shape derivatives of the penalty terms (volume, perimeter and thickness) are obtained as, see e.g. \cite{Sokolowski1992}
\begin{align}
DJ_{2}(\Om)[\Vv]&=\nu_2\int_{D}\nab\cdot\Vv\diff x\\
DJ_{3}(\Om)[\Vv]&=\nu_3\int_{\Ge}\kappa\langle \Vv,\nv\rangle \diff s
\end{align}
and see \cite{Allaire2016} for
\begin{equation}
\begin{aligned}
DJ_4(\Om)[\Vv]=\nu_4\int_{\Ge}\int_0^{d_{min}}\Big[&\Vv(x)\cdot\vec{n}(x)\Big\{\kappa(x)(d_{\Om}\left(x_m\right)^+)^2+\\
&2d_{\Om}(x_m)^+\nab d_\Om(x_m)\cdot\nab d_\Om(x)\Big\}-\\
&\Vv(p_{\partial\Om}(x_m))\cdot\vec{n}(p_{\partial\Om}(x_m))2(d_{\Om}(x_m))^+\Big]\diff\xi \diff s
\end{aligned}
\end{equation}
for mean curvature $\kappa$ and offset point $x_m=x-\xi\nv(x)$, where we require the shape derivative of the signed distance function \cite{Delfour2011}
\begin{align}
Dd_\Om(x)[\Vv]=-\Vv(p_{\partial\Om}(x))\cdot\vec{n}(p_{\partial\Om}(x))\label{EqSDtoSDF}
\end{align}
with operator $p_{\partial\Om}$ that projects a point $x\in\Om$ onto its closest boundary and holds for all $x\notin\Sigma$, where $\Sigma$ is referred to as the ridge, where the minimum in (\ref{Eq:6EuclMin}) is obtained by two distinct points.

\section{Numerical Results} \label{sec:NumRes}
In the first part of this section we shortly sketch the SIP-DG method as in \cite{Hartmann2008}, before we discuss the well-balancedness of the porous SWE with diffusive terms and describing the algorithm for shape optimization in detail. Results are finally tested in two different scenarios.

\subsection{SIP-DG}
\label{Sec:SIPGDG}
As in \cite{Schlegel20212} we solve the boundary value problem (\ref{Eq:SWE}), the adjoint problem (\ref{Eq:AdjVectFormPor}) such as all quantities of the objective (\ref{Eq:Obj_sum}) with the finite element solver FEniCS \cite{FEniCS2015}. For the time discretization we can choose between implicit and explicit integration arising from theta-methods \cite{Hairer1996}.
High accuracy even for the inviscid and hyperbolic PDE is achieved using a SIP-DG method to discretize in space \cite{Hartmann2008}. This implies discontinuous cell transitions, and hence a formulation based on each element $\kappa\in\mathcal{T}_h$ or facet $\Gamma_I$ for a subdivision $\mathcal{T}_h$ of some domain $\Om$, such as a redefinition of each function and operator on the so-called broken and possibly vector-valued $d$-dimensional Sobolov space $\mathcal{H}^m(\mathcal{T}_h\times(0,T))^d$. In this light, we also need to define the average $\{\!\{U\}\!\}=(U^++U^-)/2$ and jump term $\underline{[\![U]\!]}=U^+\otimes n_++U^-\otimes n_-$ to express fluxes on cell transitions. The discretization then reads for solution and test-function $U_h,P_h\in\mathcal{H}^1(\mathcal{T}_h\times(0,T))^3$ as \cite{Hartmann2008}\cite{Houston2018}
\begin{equation}
\begin{aligned}
N_h(U_h,P_h)=&\int_{0}^{T}\int_{\Om}\Big[\frac{\partial \phi_h U_h}{\partial t}\cdot P_h-\phi_h F(U_{h}):\nab_hP_h+G(f(\phi_h,\mu))\nab_h(\hat{U}_h):\nab_hP_h-\\
&\phi_h S(U_h)\cdot P_h- S_{\phi}(U_h)\cdot P_h\Big] \diff x\diff t+\\
&\int_{0}^{T}\sum_{\kappa\in\mathcal{T}_h}\int_{\partial\kappa\setminus\Gamma}\mathcal{F}(U^{+}_h,U^{-}_h,\nv^+)\cdot P^{+}_h\diff s\diff t+\\
&\int_{0}^{T}\int_{\Gamma_I}\Big[\underline{\delta}_h:\underline{[\![P_h]\!]}\diff s\diff t-\{\!\{G(f(\phi_h,\mu))\nab_h (P_h)\}\!\}:\underline{[\![\hat{U}_h]\!]}-\\
&\{\!\{G(f(\phi_h,\mu))\nab_h( \hat{U}_h)\}\!\}:\underline{[\![P_h]\!]}\Big] \diff s\diff t+N_{\Gamma,h}(U_h,P_h,\phi_h)\text{,}
\label{Eq:22DGDiscretization}
\end{aligned}
\end{equation}
where fluxes at the discontinuous cell transitions are defined by the numerical flux function $\mathcal{F}(U^{+,*}_h,U^{-,*}_h,\nv^+)$. 
\begin{remark}For the advective flux and for a given flux Jacobian $J_i:=\partial_UF_i(U)$ and matrix $B(U,\nv)=\sum_{i=1}^{2}n_iJ_i(U)$, we can choose between a variety of fluxes \cite{Aizinger2002}. From here on the (Local) Lax-Friedrichs Flux is used that is defined as
	\begin{equation}
	\begin{aligned}
	\mathcal{F}(U^+,U^-,\nv)|\partial\kappa=\frac{1}{2}\left(F(U^+)\cdot \nv+F(U^-)\cdot \nv+\alpha_{\max}(U^+-U^-)\right)\text{,}	\end{aligned}
	\label{Eq:23LaxFrFlux}
	\end{equation}
	where $\alpha_{\max}=\max_{V=U^+,U^-}\{|\lambda(B(V,\nv_\kappa))|\}$ with $\lambda(B(V,\nv_\kappa))$ returning a sequence of eigenvalues for the matrix $B$ restricted on a side of element $\kappa$.
\end{remark}
\begin{remark}
	For the classical SWE eigenvalues of the SWE-Jacobian are obtained following \cite{Aizinger2002}, where $c=\sqrt{gH}$ denotes the wave celerity, as
	\begin{equation}
	\begin{aligned}
	\lambda(n_1J_1+n_2J_2)&=\{\lambda_{1},\lambda_2,\lambda_3\}
	\\&=\{un_1+vn_2-c,un_1+vn_2,un_1+vn_2+c\}\text{.}
	\end{aligned}
	\label{Eq:26EV}
	\end{equation}
\end{remark}
\begin{remark}
	We would like to highlight, that the chosen interface condition, can be resolved in an SIP-DG scheme for cells at the interface as well. Hence, the summation of all integrals of interior cell boundaries  $\sum_{\kappa\in\mathcal{T}_h}\int_{\partial\kappa\setminus\Gamma}$ in (\ref{Eq:22DGDiscretization}) includes the interface boundary $\Ge$. We show this in Appendix \ref{app:DGforInterface}.
\end{remark}
In (\ref{Eq:22DGDiscretization}) we define the penalization term for the viscous fluxes as \cite{Hartmann2008}
\begin{equation}
\begin{aligned}
\underline{\delta}_h(\hat{U}_h)=C_{IP}\frac{p_{DG}^2}{h_{\kappa}}\{\!\{G(f(\phi_h^+,\mu))\}\!\}\underline{[\![\hat{U}_h]\!]}\text{,}
\end{aligned}
\label{Eq:viscousflux}
\end{equation}
where $C_{IP}>0$ is a constant, $p_{DG}>0$ the polynomial order of the DG method and $h_\kappa>0$ the element-diameter for $\kappa\in\mathcal{T}_h$.
What is remaining in (\ref{Eq:22DGDiscretization}) is the specification of the boundary term, here we state that
\begin{equation}
\begin{aligned}
N_{\Gamma,h}(U_h,P_h)=&\int_{0}^T\int_{\Gamma}\mathcal{F}(U^{+}_h,U_\Gamma(U^{+}_h),\nv)\cdot P^{+}_h\diff s\diff t+\\
&\int_{0}^T\int_{\Gamma_n}\Big[\underline{\delta}_\Gamma (U_h^+):P_h^+\otimes \nv^++ G(f(\phi_h^+,\mu))\nab_h(\hat{U}^+_h):P_h^+\otimes\nv-\\
&G(f(\phi_h^+,\mu))\nab_hV_h^+:(\hat{U}_h^+-U_{\Gamma}(\hat{U}_h^+))\otimes \nv\Big] \diff s\diff t
\end{aligned}
\label{Eq:28DGDiscretizationBoundary}
\end{equation}
where $\Gamma_n$ are all boundaries of type Neumann. Additionally, we define
\begin{align}
\underline{\delta}_\Gamma(U_h^+)&=C_{IP}G(f(\phi_h^+,\mu))\frac{p_{DG}^2}{h_{\kappa}}(U_h^+-U_{\Gamma}(U_h^+))\otimes\nv\\
\mathcal{F}(U^{+}_h,U_\Gamma(U^{+}_h),\nv^+)&=\frac{1}{2}[\nv^+\cdot  F(U_h^+)+\nv^+\cdot  F(U_{\Gamma}(U_h^+))]\text{.}
\end{align} For the pure advective SWE open and rigid-wall boundary functions are defined as in \cite{Aizinger2002}.

\subsection{Well-Balancedness of SIP-DG for Porous SWE}
\label{Sec:WellBalancedness}
Approximate numerical solutions to systems like (\ref{Eq:SWE}), which allow to properly handle shocks and contact discontinuities, are in general known to be inaccurate even for near steady states \cite{Gosse2000}. This difficulty can be overcome by using the so-called well-balanced schemes, firstly introduced in \cite{Bermudez1994}. We will derive this property for our numerical scheme for the case of one-dimensional equations extending the approach in \cite{Wang2006} to porous SWE with diffusive terms. The two-dimensional formulation follows immediately then. Before starting, we explicitly state that we rely our solver on variables 
\begin{align}
\tilde{U}=\begin{pmatrix}
h\\
uh
\end{pmatrix}=
\begin{pmatrix}
\phi H\\
\phi uH
\end{pmatrix}\text{.}
\label{VarReform}
\end{align}
Hence, we redefine the $1$D porous SWE without diffusion in vector notation as
\begin{align}
\partial_t(\tilde{U})+\nabla\cdot({F(\tilde{U}))}=S(\tilde{U}) + S_\phi(\tilde{U}) 
\end{align}
for given flux matrix
\begin{align}
F(\tilde{U})=\begin{pmatrix}
hu\\ 
hu^2+\frac{1}{2}gh^2/\phi
\end{pmatrix}
\end{align}
and as before a source regarding the variations in the sediment
\begin{align}
S(\tilde{U})=\begin{pmatrix}
0\\
-gh\frac{\partial z}{\partial x}
\end{pmatrix}
\label{1Dsource}
\end{align}
and variations in the porosity factor
\begin{align}
S_\phi(\tilde{U})=\begin{pmatrix}
0\\
\frac{g}{2}\frac{h^2}{\phi^2}\frac{\partial \phi}{\partial x}
\end{pmatrix}\text{.}
\label{1Dporoussource}
\end{align}
Well-Balancing relies on incorporating the discretization of the source term in fluxes, such that e.g. (\ref{Eq:23LaxFrFlux}) used in (\ref{Eq:22DGDiscretization}) is redefined. Preserving still water stationary conditions means that $uh=0$ for $h/\phi+z = C$ for all $t\in (0,T)$. For the contribution to time changes it should be justified that on each element $\kappa\in\mathcal{T}_h=[x_{j-1/2},x_{j+1/2}]$ 
\begin{equation}
\begin{aligned}
R=&- \int_{\kappa}F(\tilde{U}_h(x,t))\cdot \partial_xP_h(x)\diff x+\mathcal{F}^L_{j+1/2}\cdot P_h(x^-_{j+1/2})-\mathcal{F}^R_{j-1/2}\cdot P_h(x^+_{j-1/2})\\
&-\int_{\kappa}S(\tilde{U}_h(x,t))\cdot P_h(x)\diff x-\int_{\kappa}S_{\phi}(\tilde{U}_h(x,t))\cdot P_h(x)\diff x=0\text{.}
\label{Eq. 20}
\end{aligned}
\end{equation}
In \cite{Wang2006} it is stated that Equation (\ref{Eq. 20}) is fulfilled if,
\begin{enumerate}
	\item $\mathcal{F}^L_{j+1/2}=F(\tilde{U}_h(x^-_{j+1/2}))$ and $\mathcal{F}^R_{j-1/2}=F(\tilde{U}_h(x^+_{j-1/2})$
	\item We are in a steady state and $u_h$ is a numerical approximation of $u$, hence 
	\begin{align*}
	\partial_xF(\tilde{U}_h)=\begin{pmatrix}
	0\\
	g(h_h,z_h,\phi_h)
	\end{pmatrix}\text{.}
	\end{align*}
\end{enumerate}
The assumption above can be easily justified and shows the appropriateness of the unmodified scheme in case of continuous piecewise sediment $z_h(x_{j+1/2}^-)=z_h(x_{j+1/2}^+)$ and porosity coefficients $\phi_h(x_{j+1/2}^-)=\phi_h(x_{j+1/2}^+)$. Situations with discontinuous sediment are dealt by relying on the idea of redefining variables \cite{Audusse2015}, i.e.
\begin{align}
h^{+,*}_{h,j+1/2}=\max\left(0,h^{+}_{h,j+1/2}+z^{+}_{h,j+1/2}-\max\left(z^{+}_{h,j+1/2},z^{-}_{h,j+1/2}\right)\right)\\
h^{-,*}_{h,j+1/2}=\max\left(0,h^{-}_{h,j+1/2}+z^{-}_{h,j+1/2}-\max\left(z^{+}_{h,j+1/2},z^{-}_{h,j+1/2}\right)\right)
\end{align}
which can be extended for varying porosity coefficient to
\begin{equation}
\begin{aligned}
&h^{+,*}_{h,j+1/2}=\\
&\max\left(0,\frac{h^{+}_{h,j+1/2}}{\phi^{+}_{h,j+1/2}}+z^{+}_{h,j+1/2}-\max(z^{+}_{h,j+1/2},z^{-}_{h,j+1/2})\right)\min\left(\phi^{+}_{h,j+1/2},\phi^{-}_{h,j+1/2}\right)
\end{aligned}
\end{equation}
\begin{equation}
\begin{aligned}
&h^{-,*}_{h,j+1/2}=\\
&\max\left(0,\frac{h^{-}_{h,j+1/2}}{\phi^{-}_{h,j+1/2}}+z^{-}_{h,j+1/2}-\max(z^{+}_{h,j+1/2},z^{-}_{h,j+1/2})\right)\min\left(\phi^{+}_{h,j+1/2},\phi^{-}_{h,j+1/2}\right)
\end{aligned}
\end{equation}
such that
\begin{align}
\tilde{U}^{+,*}_{h,j+1/2}=\begin{pmatrix}
h^{+,*}_{h,j+1/2}\\
uh^{+}_{h,j+1/2}
\end{pmatrix}\text{.}
\label{Eq24}
\end{align}
\begin{theorem} (Well-Balancedness)
	Redefining $\tilde{U}^{\pm,*}_{h,j+1/2}$ as in (\ref{Eq24}) in accordance with corrector-terms lead to a well-balanced scheme
\end{theorem}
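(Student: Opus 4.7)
The goal is to verify that the residual $R$ defined in (\ref{Eq. 20}) vanishes whenever the discrete solution sits at still water, i.e.\ $uh \equiv 0$ and $h/\phi + z \equiv C$. Following Wang2006, this reduces to two checks: (i) an interior consistency identity $\partial_x F(\tilde{U}_h) = S(\tilde{U}_h) + S_\phi(\tilde{U}_h)$ that absorbs the flux divergence into the source terms on each element where $\phi_h$ and $z_h$ are smooth, and (ii) a flux-matching identity at the interface $x_{j+1/2}$ saying that $\mathcal{F}^L_{j+1/2}$ and $\mathcal{F}^R_{j-1/2}$, evaluated on the modified traces $\tilde{U}^{\pm,*}_{h,j+1/2}$ together with the corrector terms, reproduce the analytic fluxes $F(\tilde{U}_h(x^{\mp}_{j+1/2}))$ on each side. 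I would address (i) first, since it is independent of the jumps, and (ii) afterwards.

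For (i), work inside a single cell $\kappa$ with smooth $h_h,\phi_h,z_h$. At $u=0$ the flux collapses to $F=(0,\tfrac12 g h^2/\phi)^T$. Using the stationarity relation in the form $\partial_x(h/\phi)=-\partial_x z$, a direct computation yields
\begin{align*}
\partial_x\!\left(\tfrac12 g h^2/\phi\right) \;=\; -g h\, \partial_x z + \tfrac{g h^2}{2\phi^{2}}\,\partial_x\phi,
\end{align*}
which is exactly $S+S_\phi$ by (\ref{1Dsource}) and (\ref{1Dporoussource}). After integration by parts against any test $P_h$, the interior volume integrals in (\ref{Eq. 20}) cancel cell-wise.

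For (ii), the crucial observation is that under $h^{\pm}/\phi^{\pm}+z^{\pm}=C$ one has
\begin{align*}
\frac{h^{\pm}_{h,j+1/2}}{\phi^{\pm}_{h,j+1/2}}+z^{\pm}_{h,j+1/2}-\max(z^+,z^-) \;=\; C-\max(z^+,z^-)\;\ge\;0,
\end{align*}
so the $\max(0,\cdot)$ clip is inactive and both sides produce the common value $(C-\max(z^+,z^-))\min(\phi^+,\phi^-)$. Hence $h^{+,*}_{h,j+1/2}=h^{-,*}_{h,j+1/2}$, and since $(uh)^{\pm}=0$ we also have $\tilde{U}^{+,*}_{h,j+1/2}=\tilde{U}^{-,*}_{h,j+1/2}$. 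The Lax--Friedrichs flux (\ref{Eq:23LaxFrFlux}) therefore reduces to a single central value with vanishing numerical dissipation. The remaining mismatch $\tfrac12 g(h^{\pm,*})^2/\min(\phi^+,\phi^-)-\tfrac12 g(h^{\pm})^2/\phi^{\pm}$ between the modified and the true hydrostatic pressure on each side is exactly the quantity the corrector is introduced to cancel, closing $R=0$.

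The main obstacle is precisely this last bookkeeping: the corrector must simultaneously repair the effect of jumps in $z$ and in $\phi$ on the hydrostatic term, without spoiling consistency away from steady state. A secondary step is to show that the diffusive SIP-DG contributions in (\ref{Eq:22DGDiscretization}) are also silent at still water; this reduces to observing that $\nabla(H+z)=\nabla C=0$ and $\Qv=0$, so the interior gradient integral, the penalty $\underline{\delta}_h$, and both the jump and average trace terms vanish identically, and no modification of the viscous part is needed. The two-dimensional extension then follows by applying the one-dimensional identity in each Cartesian direction and summing, as in Wang2006.
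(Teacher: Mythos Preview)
Your proposal is correct and follows the same route as the paper: verify the two Wang2006 conditions by (i) matching $\partial_x F(\tilde{U}_h)$ to $S+S_\phi$ in the interior and (ii) showing that at still water the modified traces satisfy $\tilde{U}^{+,*}_{h,j+1/2}=\tilde{U}^{-,*}_{h,j+1/2}$, so that the Lax--Friedrichs flux plus the hydrostatic corrector $\tfrac{g}{2}(h^{-})^2/\phi^{-}-\tfrac{g}{2}(h^{-,*})^2/\min(\phi^{+},\phi^{-})$ reproduces $F(\tilde{U}^{-}_{h,j+1/2})$. The paper's own proof is much terser---it only writes down the corrector and asserts $\mathcal{F}^L_{j+1/2}=F(\tilde{U}^-_{h,j+1/2})$---so your explicit verification of $h^{+,*}=h^{-,*}$, of the interior identity, and of the vanishing of the SIP-DG viscous contributions only adds detail rather than changing the argument.
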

\begin{proof}
	\begin{align*}
	\mathcal{F}^L_{j+1/2}=&\mathcal{F}(\tilde{U}^{-,*}_{h,j+1/2},\tilde{U}^{+,*}_{h,j+1/2})\\
	+&\begin{pmatrix}
	0\\
	\frac{g}{2}(h^{-}_{h,j+1/2})^2/\phi^{-}_{h,j+1/2}-\frac{g}{2}(h^{-,*}_{h,j+1/2})^2/\min\left(\phi^{+}_{h,j+1/2},\phi^{-}_{h,j+1/2}\right)
	\end{pmatrix}\\
	=&F(\tilde{U}^-_{h,j+1/2})
	\end{align*}
	Similarly
	\begin{align*}
	\mathcal{F}^R_{j-1/2}&=F(\tilde{U}^+_{h,j-1/2})
	\end{align*}
\end{proof}
Extending results to two dimensions can be done by looking at the residual on an element $\kappa\in\mathcal{T}_h$
\begin{equation}
\begin{aligned}
R=&- \int_{\kappa}F(\tilde{U}_h(x,t)):\nab P_h(x)\diff x+\int_{\partial \kappa}\mathcal{F}_{\partial \kappa}(U_h^+(x,t),U_h^-(x,t),\nv^+)\cdot P_h^+\diff s\\
&-\int_{\kappa}S(\tilde{U}_h(x,t))\cdot P_h(x)\diff x-\int_{\kappa}S_{\phi}(\tilde{U}_h(x,t))\cdot P_h(x)\diff x=0
\label{EResidualTwoD}
\end{aligned}
\end{equation}
and relying on a flux modification on each elemental boundary as
\begin{align}
&\mathcal{F}_{\partial \kappa}=\mathcal{F}(\tilde{U}^{-,*}_{h,\partial \kappa},\tilde{U}^{+,*}_{h,\partial \kappa},\nv^+_{\partial \kappa})\\
&+
G\begin{pmatrix}
0\\
\nv_{0,\partial \kappa}^+\\
\nv_{1,\partial \kappa}^+
\end{pmatrix}
\label{Eq:FluxMod2D1}\begin{pmatrix}
0\\
\frac{g}{2}(h^{+}_{h,\partial \kappa})^2/\phi^{+}_{h,\partial \kappa}-\frac{g}{2}(h^{+,*}_{h,\partial \kappa})^2/\min\left(\phi^{+}_{h,\partial \kappa},\phi^{-}_{h,\partial \kappa}\right)\\
\frac{g}{2}(h^{+}_{h,\partial \kappa})^2/\phi^{+}_{h,\partial \kappa}-\frac{g}{2}(h^{+,*}_{h,\partial \kappa})^2/\min\left(\phi^{+}_{h,\partial \kappa},\phi^{-}_{h,\partial \kappa}\right)
\end{pmatrix}\text{,}
\end{align}
where 
\begin{align}
\tilde{U}^{+,*}_{h,\partial \kappa}=\begin{pmatrix}
h^{+,*}_{h,\partial \kappa}\\
uh^{+}_{h,\partial \kappa}\\
vh^{+}_{h,\partial \kappa}
\end{pmatrix}\text{.}
\label{2dRedefinition}
\end{align} 
\begin{remark}
	Adding diffusive terms in the form
	\begin{align*}
	\partial_t(\tilde{U})+\nabla\cdot{( F(\tilde{U}))}-\nab\cdot(G(f(\phi,\mu))\nab \hat{U})=S(\tilde{U})+S_\phi(\tilde{U})\text{,}
	\end{align*}
	where $\hat{U}=(z+h/\phi,uh,vh)$, does not disturb well-balancedness, since rest conditions cancel contributing terms.
\end{remark}
\begin{remark}
	Reformulation (\ref{VarReform}) requires eigenvalues in the form of (\ref{Eq:26EV}) with $c=\sqrt{gh/\phi}$ to be used in the numerical flux function.
\end{remark}
\begin{remark}
	The numerical scheme used to handle discontinuous sediment and porosity coefficients forms the limit of a smoothed scenario, such that $U_\alpha\rightarrow U$ in $H^1(\Om\times(0,T))^3$ where $\phi_\alpha\rightarrow\phi$ for $\alpha\rightarrow0$ which is verified for one dimension numerically in Appendix \ref{app:NumericalConvAlpha}.
\end{remark}

\subsection{Implementation Details for Shape Optimization}
\label{Sec:Implementation}
We rely on the classical structure of adjoint and gradient-descent based shape optimization algorithms shortly sketched in the algorithm below.

\begin{algorithm}
\caption{Shape Optimization Algorithm}
\begin{algorithmic}
  \STATE Initialization
  \WHILE  {$||DJ(\Om_k)[\Vv]||>\eps_{TOL}$}
    \STATE 1. Calculate SDF $w_k$
    \STATE 2. Calculate State $U_k$ via $\tilde{U}_k$
    \STATE 3. Calculate Adjoint $P_k$ via $\tilde{P}_k$
    \STATE 4. Use $DJ_{1,2,3,4}(\Om_k)[\Vv]$ to calculate Gradient $W_k$
    \STATE 5. Perform Linesearch for $\tilde{W}_k$
    \STATE 6. Deform $\Om_{k+1}\longleftarrow \phi_{\tilde{W}_k}(\Om_k)$
  \ENDWHILE
\end{algorithmic}
\end{algorithm}

The signed distance function in (\ref{Eq:1Thick}) is calculated as the solution to the Eikonal Equation with $f(x)=1$, $q(x)=0$
\begin{equation}
\begin{aligned}
|\nab w(x)|&=f(x) \quad &x\in\Om&&&&\\
w(x)&=q(x)\quad &x\in\partial\tilde{\Om}&\text{,}&&&
\end{aligned}
\end{equation}
where we solve a stabilized viscous version to obtain $w\in H^1(\Om)$ for all $v\in H^1(\Om)$ i.e.
\begin{equation}
\begin{aligned}\int_\Om\sqrt{\nab w\cdot\nab w}v\diff x-\int_\Om fv\diff x +\int_\Om\mu_{SDF} \nab w\cdot\nab v\diff x=0\text{,}
\end{aligned}
\end{equation}
where $\mu_{SDF}=\max h_\kappa$ is dependent on the element-diameter $h_\kappa$.
Numerical solutions to the adjoint equations require us to rewrite the vector form (\ref{Eq:AdjVectFormPor}) with the help of the product rule, i.e.
\begin{align}
\frac{\partial P}{\partial t}-\nab\cdot(AP,BP)-\tilde{C}P+\nab\cdot(G(f(\phi,\mu))\nab P)=-S \text{,}
\end{align}
where $\tilde{C}$ is defined to be
\begin{align}
\tilde{C}=C-A_x-B_y\text{.}
\end{align} 
As we have shown in \cite{Schlegel20212} the eigenvalues of matrix $B^*(P,\nv)$ belonging to the adjoint flux Jacobian $\mathcal{J}^*_i:=\partial_PF^*_i(P)$ equal the eigenvalues of matrix $B(U,\nv)$ belonging to the flux Jacobian $\mathcal{J}_i:=\partial_UF_i(U)$. The SWE adjoint problem is then solved in the same manner as the scheme for the forward system (\ref{Sec:SIPGDG}) with (\ref{Sec:WellBalancedness}),  using a well-balanced SIP-DG discretization in space and a member of the theta-methods for the time discretization.

The finite element mesh $\mathcal{T}_h$ deforms in each iteration via the solution $\vec{W}:\Om\rightarrow\R^2$ of the linear elasticity equation \cite{Schulz2016}
\begin{equation}
\begin{aligned}
\int_\Om\sigma(\vec{W}):\eps(\Vv)\diff x&=DJ(\Om)[\Vv] \hspace{1cm} \quad &\forall\Vv\in H_0^1(\Om,\R^2)\\
\sigma:&=\lambda_{elas} Tr(\eps(\vec{W}))I+2\mu_{elas}\eps(\vec{W})\\
\eps(\vec{W}):&=\frac{1}{2}(\nab \vec{W}+\nab \vec{W}^T)\\
\eps(\Vv):&=\frac{1}{2}(\nab\Vv+\nab\Vv^T)\text{,}
\end{aligned}
\label{Eq:29LinearElasticity}
\end{equation}
where $\sigma$ and $\eps$ are called strain and stress tensor and $\lambda_{elas}$ and $\mu_{elas}$ are called Lamé parameters. We have chosen $\lambda_{elas}=0$ and $\mu_{elas}$ as the solution of the following Poisson Problem
\begin{equation}
\begin{aligned}
-\bigtriangleup\mu&=0 \hspace{1cm} &&\text{in }&& \Om&&\\
\mu&=\mu_{max} \hspace{1cm} &&\text{on }&& \Ge&&\\
\mu&=\mu_{min} \hspace{1cm} &&\text{on }&& \Ga, \Gb\text{.}&&
\end{aligned}
\label{Eq:33Lame}
\end{equation}
The source term $DJ(\Om)[\Vv]$ in (\ref{Eq:29LinearElasticity}) consists of a volume and surface part, i.e. $DJ(\Om)[\Vv]=DJ_\Om[\Vv] + DJ_{\Ge}[\Vv]$. 
\begin{remark}
	The volumetric share comes from our porous SWE shape derivative and the volume penalty. Before assembling, the test vector fields whose support does not intersect with the interface $\Ge$ are set to zero \cite{Schulz2016}. The surface part comes from the parameter regularization and the thickness constraint in (\ref{Eq:Obj_sum}).
\end{remark}
\begin{remark}
	To guarantee the attainment of useful shapes, which minimize the objective, a backtracking line search is used, which limits the step size in case the shape space is left \cite{Schulz2016} i.e. having intersecting line segments or in the case of a non-decreasing objective evaluation. As described in the algorithm above, the iteration is finally stopped if the norm of the shape derivative has become sufficiently small.
\end{remark}

\subsection{Example: The Half-Circled Mesh}\label{sec:exhalfcircled}
In the first example, we will look at the model problem - the half circle that was described in Section \ref{sec:PF}. As before, we interpret $\Ga,\Gd,\Ge$ as coastline, open sea and obstacle boundary. We will work with a rest height of the water at $\bar{H}=1$, while targeting zeroed velocities. We penalize volume and thinness by setting $\nu_2=\expnumber{1}{-4}$, $\nu_4=\expnumber{1}{-2}$ and enforce a stronger regularization by $\nu_3=\expnumber{1}{-4}$. The parameters in the porous shallow water system are set as follows: For the weight of the diffusion terms in the momentum equation we set $\mu_f=\expnumber{1}{-2}$ and determine $\mu_v$ by the usage of the mentioned shock detector \cite{Persson2006}. The gravitational acceleration is fixed at roughly $9.81$. The mesh $\mathcal{T}_h$ displayed in Figure \ref{fig:InitHalfCircle} was created using the finite element mesh generator GMSH \cite{Geuzaine2009}, where the vertex density around the obstacle is increased to ensure a high resolution. 
\begin{figure}[htb!]
	\centering
	\begin{tikzpicture}
	\node[anchor=south west,inner sep=0] (1) {\vspace{-0.9cm}
	\includegraphics[scale=0.13]{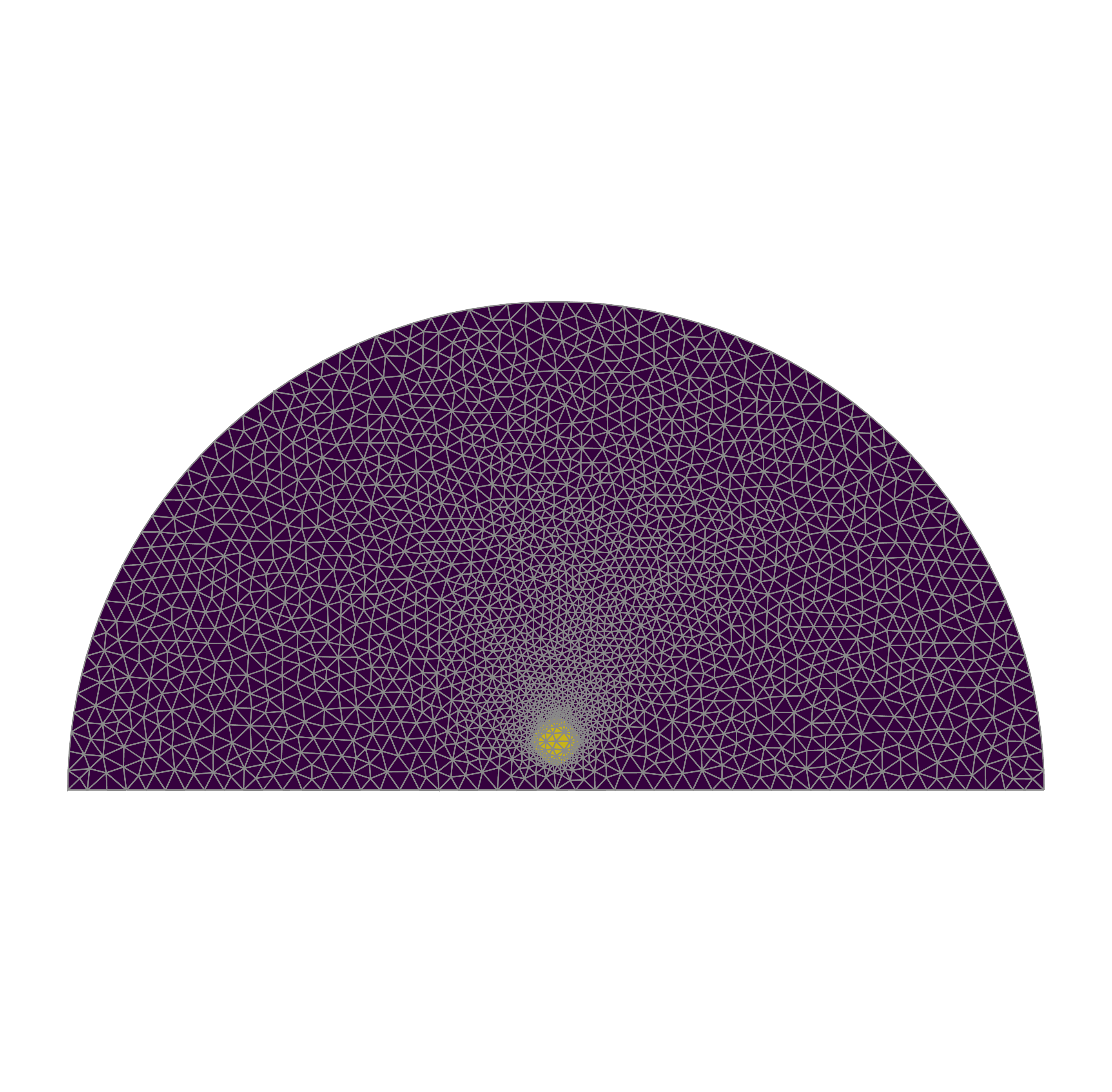}};
	\node[above= -1.1cm of 1] (7) 
	{\tiny Initial Mesh \& Porosity};
	\node[right= -0.1cm of 1] (7) 
	{\includegraphics[scale=0.07]{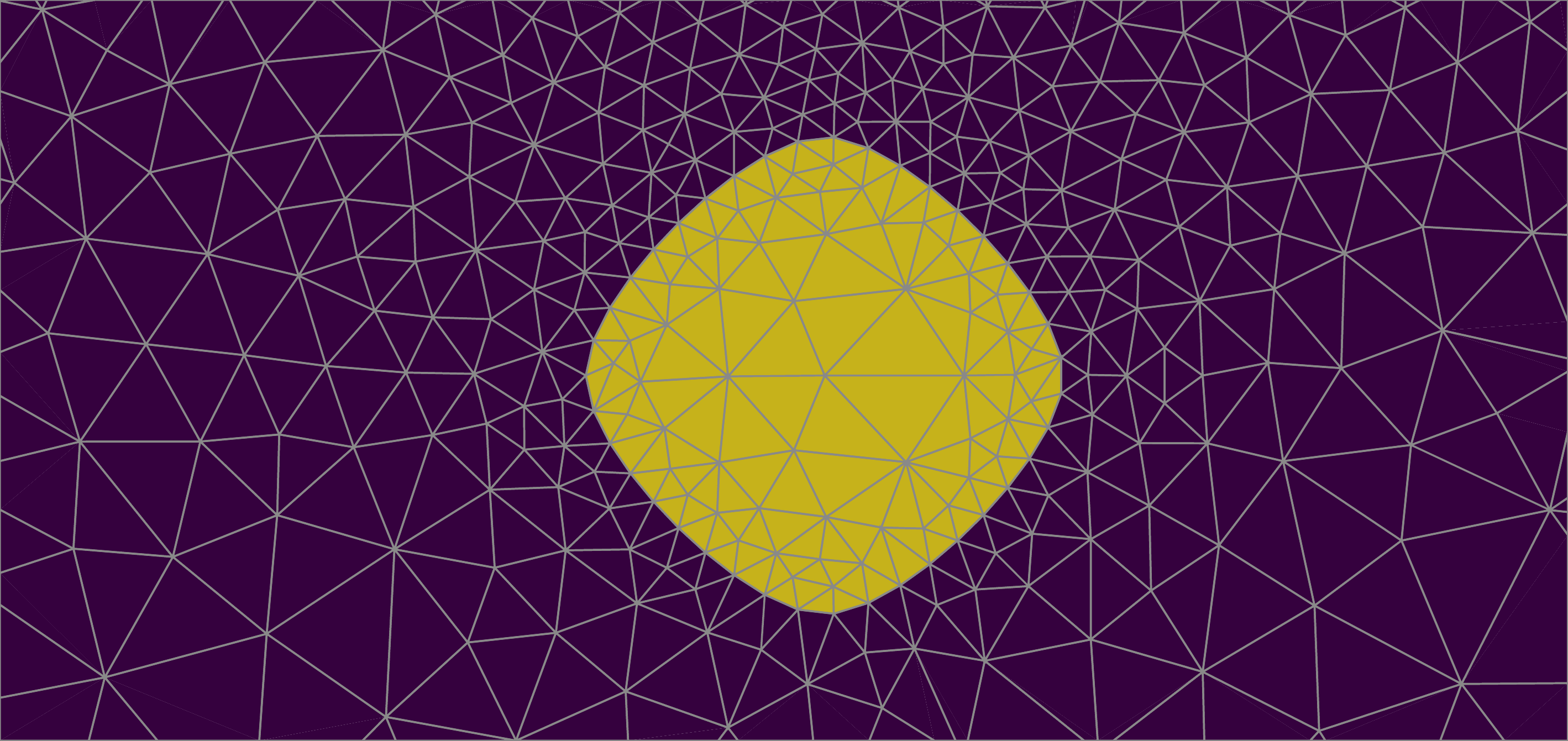}};
	\node[above left = -1cm and -0.01cm of 1](2)
	{\tiny 1.};
	\node[below right = -1cm and -1cm of 1](3)
	{\includegraphics[width=4.5cm,height=3.5cm,scale=0.5]{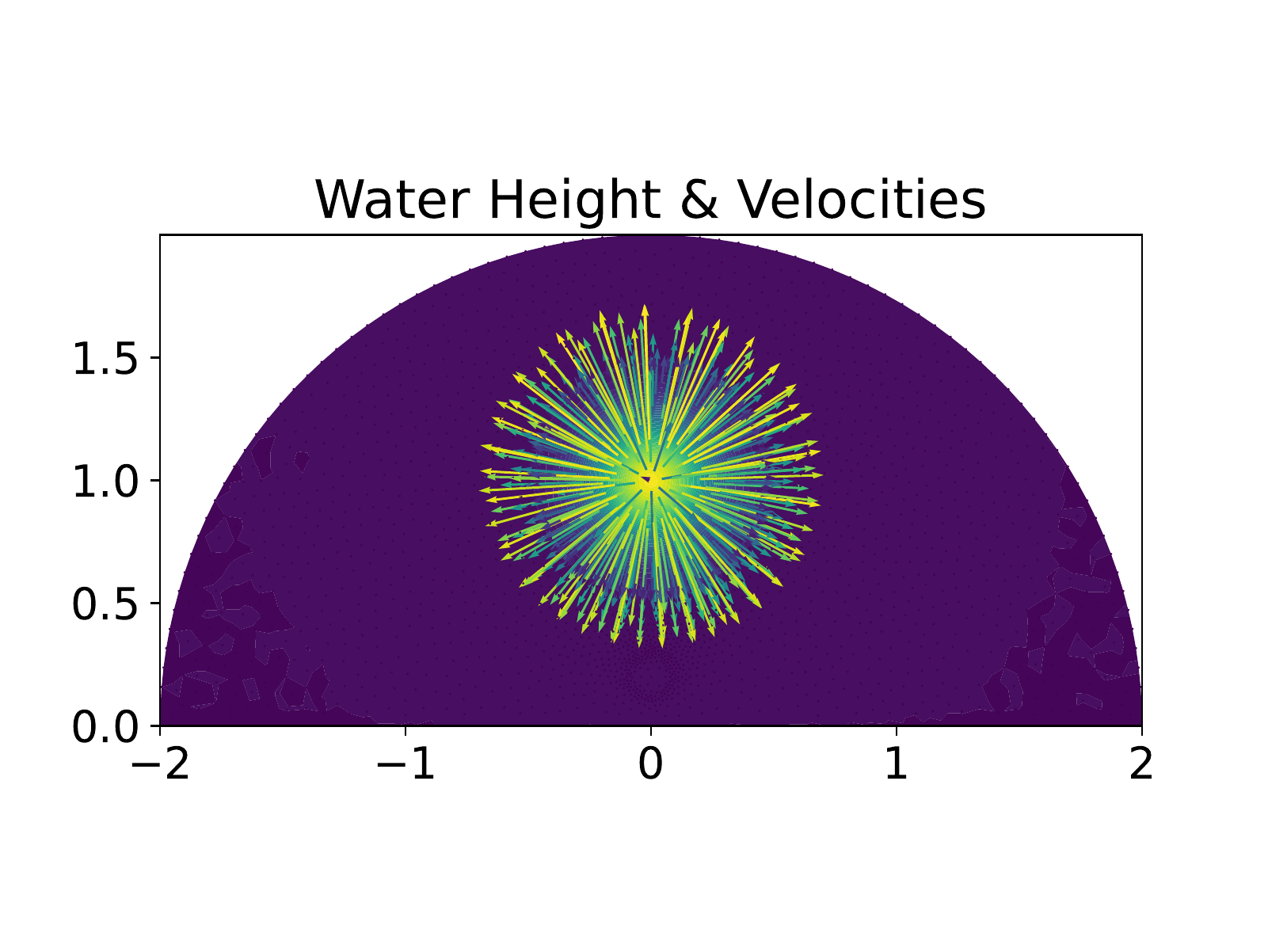}};
	\node[left= 0.015cm of 3](4)
	{\includegraphics[width=4.5cm,height=3.5cm,scale=0.5]{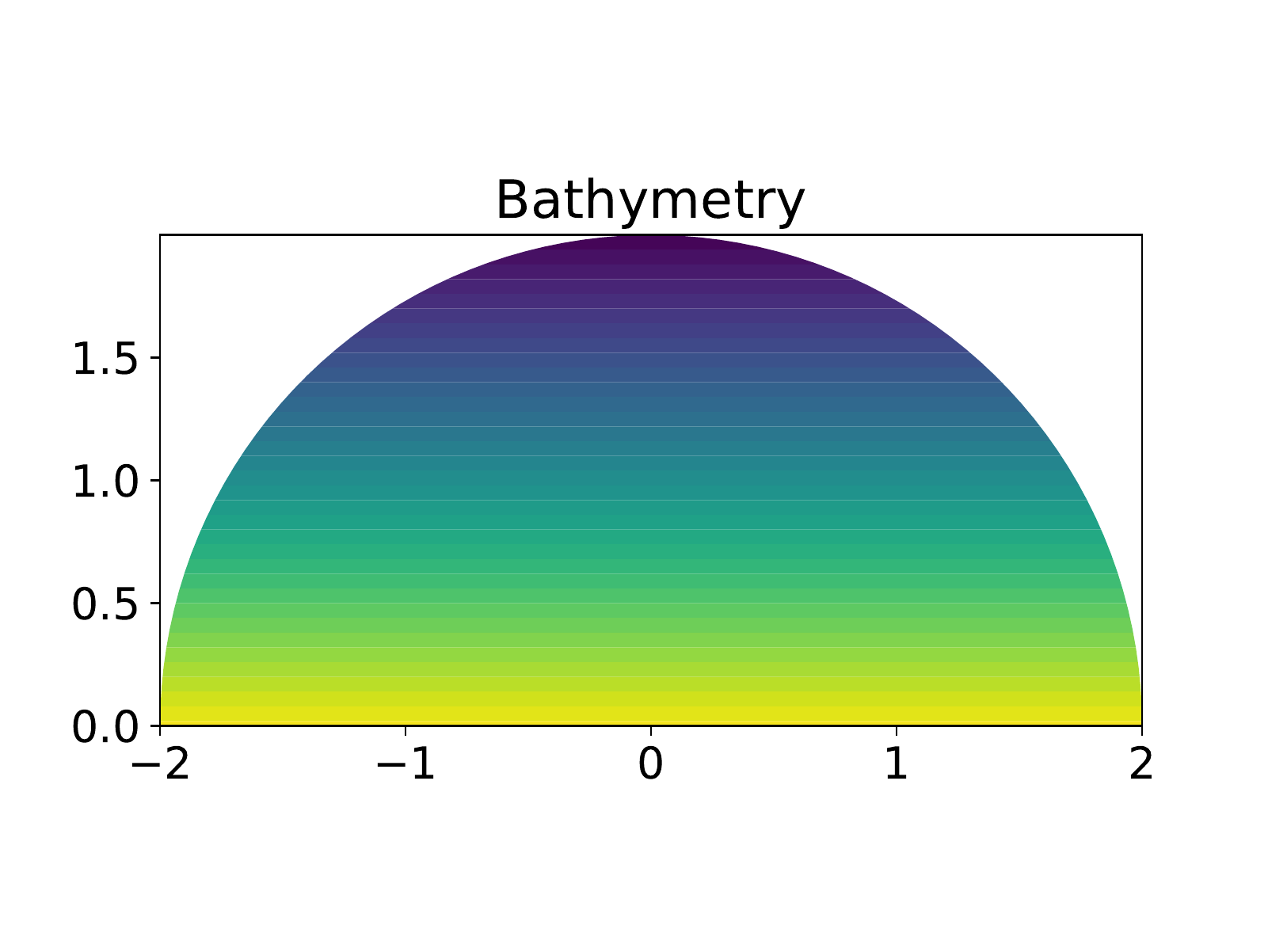}};
	\node[above left = -1cm and -0.01cm of 4](5)
	{\tiny 2.};
	\node[above left = -0.87cm and -0.01cm of 3](6)
	{\tiny 3.};
	\end{tikzpicture}
	\caption[Ex.1 Mesh, Water Height, Velocities \& Sediment]{1.: Initial Mesh and Porosity with Enlarged Image Section, 2.: Linear Bathymetry, 3.: Field State at $t=0.1$}
	\label{fig:InitHalfCircle}
\end{figure}

The material coefficient is at $\phi_2=0.4$ at $D$ and we obtain classical SWE on $\tilde{\Om}$ by setting $\phi_1=1$. In addition, we employ Gaussian initial conditions as $(H_0+z,uH_0,vH_0)=(1+\exp(-15x^2 - 15(y-1)^2),0,0)$, which result into a wave travelling in time towards the boundaries. We prescribe the boundary conditions as before, using rigid-wall and outflow boundary conditions for $\Ga$ and $\Gd$.
In this example we have used a backward Euler time-scheme, that arises from the theta-method for $\theta=1$, such as a SIP-DG-method of first order that was described before in Section \ref{Sec:SIPGDG}. For the spatial discretization, we have used $C_{IP}=20$ in the SIP-DG method. Solving the state equations requires the definition of the time-horizon $T=2$, which is chosen to include the travel of a wave to and from the shore using a time-stepping size of $\diff t=\expnumber{2}{-3}$. Our calculations are performed for a linear decreasing time-constant sediment $z=0.5-0.25y$.
Having solved state and adjoint equations the mesh deformation is performed for initial step size $\rho=1.5$ as described in Section \ref{Sec:Implementation}, where we specify $\mu_{min}=10$ and $\mu_{max}=100$ in (\ref{Eq:33Lame}). In Figure \ref{fig:OptiMesh} the result of the shape optimization procedure is displayed after $24$ iterations, where deformations appear to be symmetric. 
\begin{figure}[!htbp]
	\centering
	\begin{tikzpicture}
	\node[anchor=south west,inner sep=0] (1) {\vspace{-0.9cm}
	\includegraphics[scale=0.13]{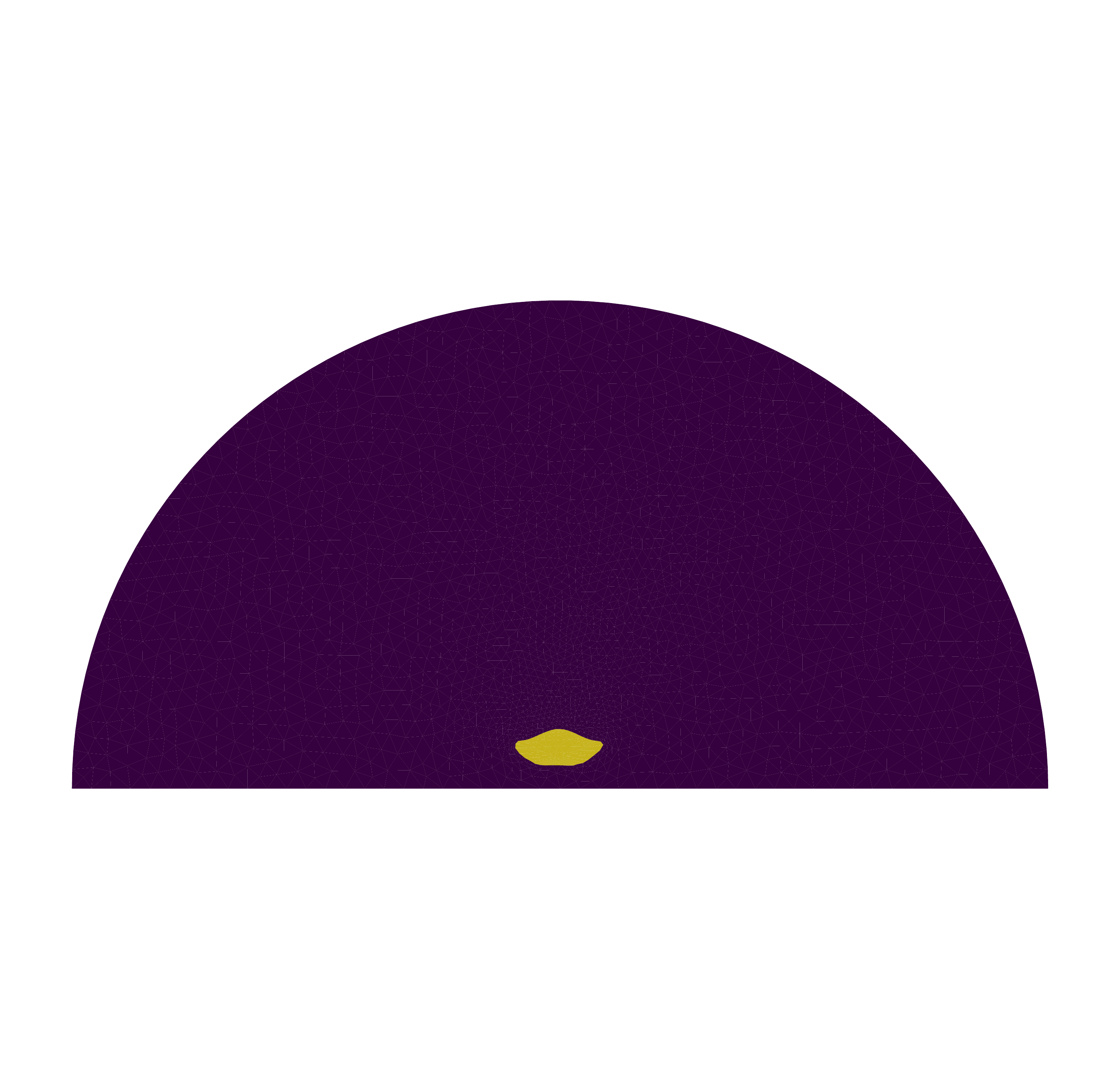}};
	\node[above= -1.1cm of 1] (7) 
	{\tiny Optimized Porosity};
	\node[right= -0.1cm of 1] (7) 
	{\includegraphics[scale=0.07]{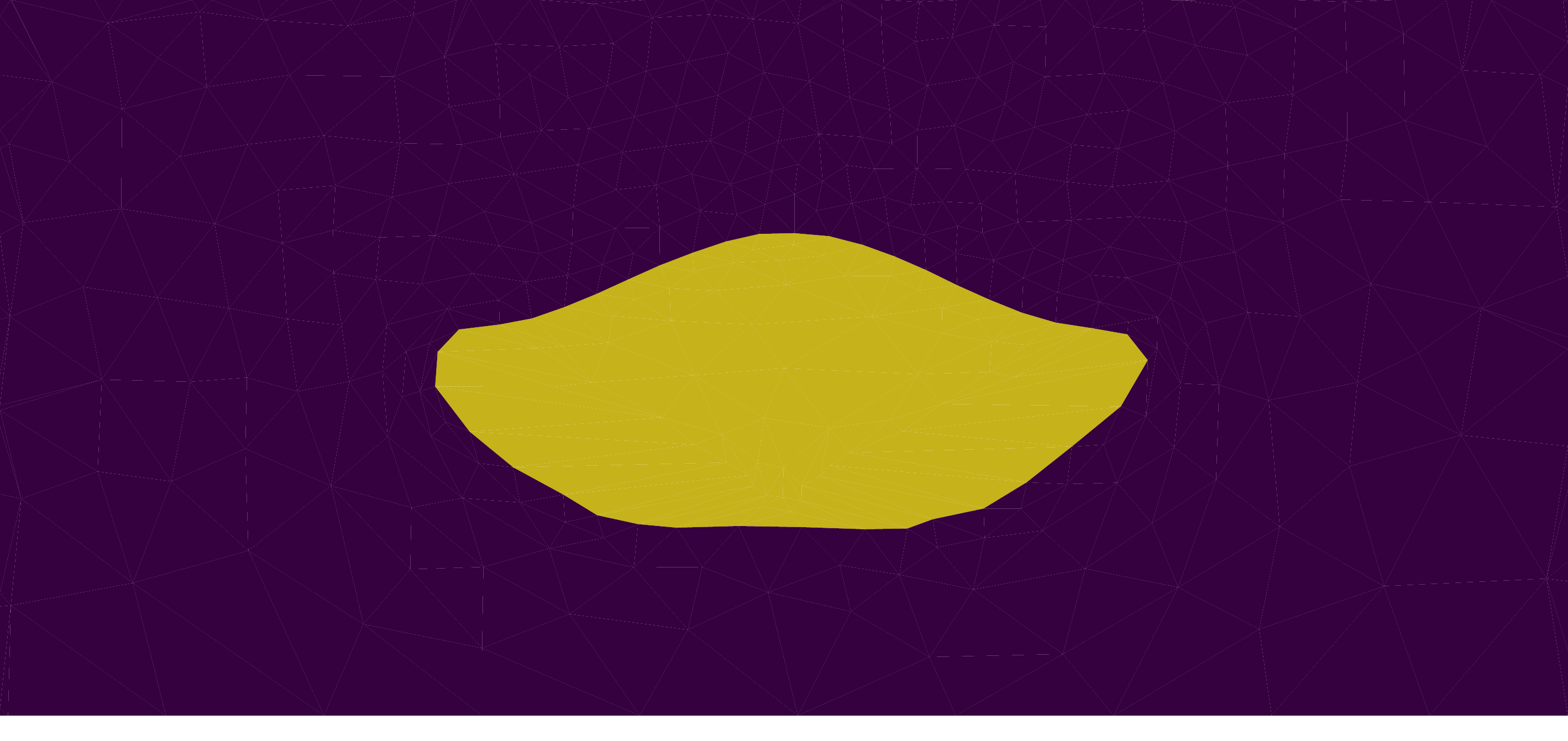}};
	\end{tikzpicture}
	\caption[Ex. Optimized Shape \& Close up]{1.:  Optimized Porous Region with Enlarged Image Section}
	\label{fig:OptiMesh}
\end{figure}
As we observe in Figure \ref{fig:OptiHalfCircle}, we have achieved a notable decrease in the objective functional.
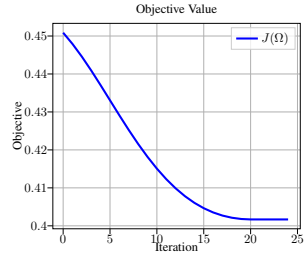
\begin{figure}[!htbp]
	\centering
	\begin{tikzpicture}
	\node[anchor=south west,inner sep=0] (1) 
	{\scalebox{0.48}{%auto-ignore
% This file was created with tikzplotlib v0.9.13.
\begin{tikzpicture}

\begin{axis}[
legend cell align={left},
legend style={fill opacity=0.8, draw opacity=1, text opacity=1, draw=white!80!black},
tick align=outside,
tick pos=left,
title={Objective Value},
x grid style={white!69.0196078431373!black},
xlabel={Iteration},
xmajorgrids,
xmin=-1.1, xmax=25.3,
xtick style={color=black},
y grid style={white!69.0196078431373!black},
ylabel={Objective},
ymajorgrids,
ymin=0.39922262253544, ymax=0.453332436404407,
ytick style={color=black}
]
\addplot [line width=1.64pt, blue]
table {%
0 0.450872899410363
1 0.447899562504334
2 0.444548792830951
3 0.440892586376256
4 0.437026859954027
5 0.433059202933991
6 0.429097402169315
7 0.425240981579931
8 0.421575299749916
9 0.418167593844178
10 0.41506526132783
11 0.41229669942244
12 0.409874183310384
13 0.407797642251068
14 0.406058269228965
15 0.404641396884818
16 0.403528573826704
17 0.40269899677845
18 0.40213051049692
19 0.401800337423172
20 0.401685639122874
21 0.401683600464817
22 0.401682159529484
23 0.401682015150845
24 0.401682012483124
};
\addlegendentry{$J(\Omega)$}
\end{axis}

\end{tikzpicture}}};
	\end{tikzpicture}
	\caption[Ex. Target Functional]{Objective Value per Iteration}
	\label{fig:OptiHalfCircle}
\end{figure}

\subsection{Example: The Mentawai Islands}\label{sec:mentawai}
The second example will investigate an archipelago in the southwest of Sumatra, Indonesia the Mentawai islands, which have turned out to be an effective shield in the 2004 and 2010 tsunami for the mainland located behind \cite{Stefanakis2014}. Mentawai islands are threatened by rising sea levels and victim to massive floodings in the last decades and are hence offering itself for protective measures.
Real coastal applications require suitable mesh representations. Shorelines are taken from the GSHHG\footnote[1]{https://www.ngdc.noaa.gov/mgg/shorelines/  (last visited May 5, 2022)} databank, where we use a geographical information system QGIS3 to process the data to GMSH for the mesh generation \cite{Avdis2016}. For computational ease, we have decided to not consider smaller islands of a diameter less than $5$km.  Similar to the preceding example, we interpret $\Ga$ as coastline of the mainland, $\Gd$ as the open sea boundary such as  $\Ge$ as the interface boundary of the offshore islands. Assuming that islands are flooded, we represent them by a difference in the material coefficient, which shape is to be optimized. For this we set $\phi_2=0.5$ at $D$ and $\phi_1=1$ on $\tilde{\Om}$ (cf. to Figure \ref{fig:InitHalfCircle2}, 1. Subfigure). As before, we are in a tsunami-like setting and start with suitable Gaussian initial conditions for the height of the water. For simplicity, the sediment height is assumed to be zero on the whole domain. The remaining model-settings are similar to Section \ref{sec:exhalfcircled}.
\begin{figure}[htb!]
	\centering
	\begin{tikzpicture}
	\node[anchor=south west,inner sep=0] (1) {\vspace{-0.9cm}
	\includegraphics[scale=0.13]{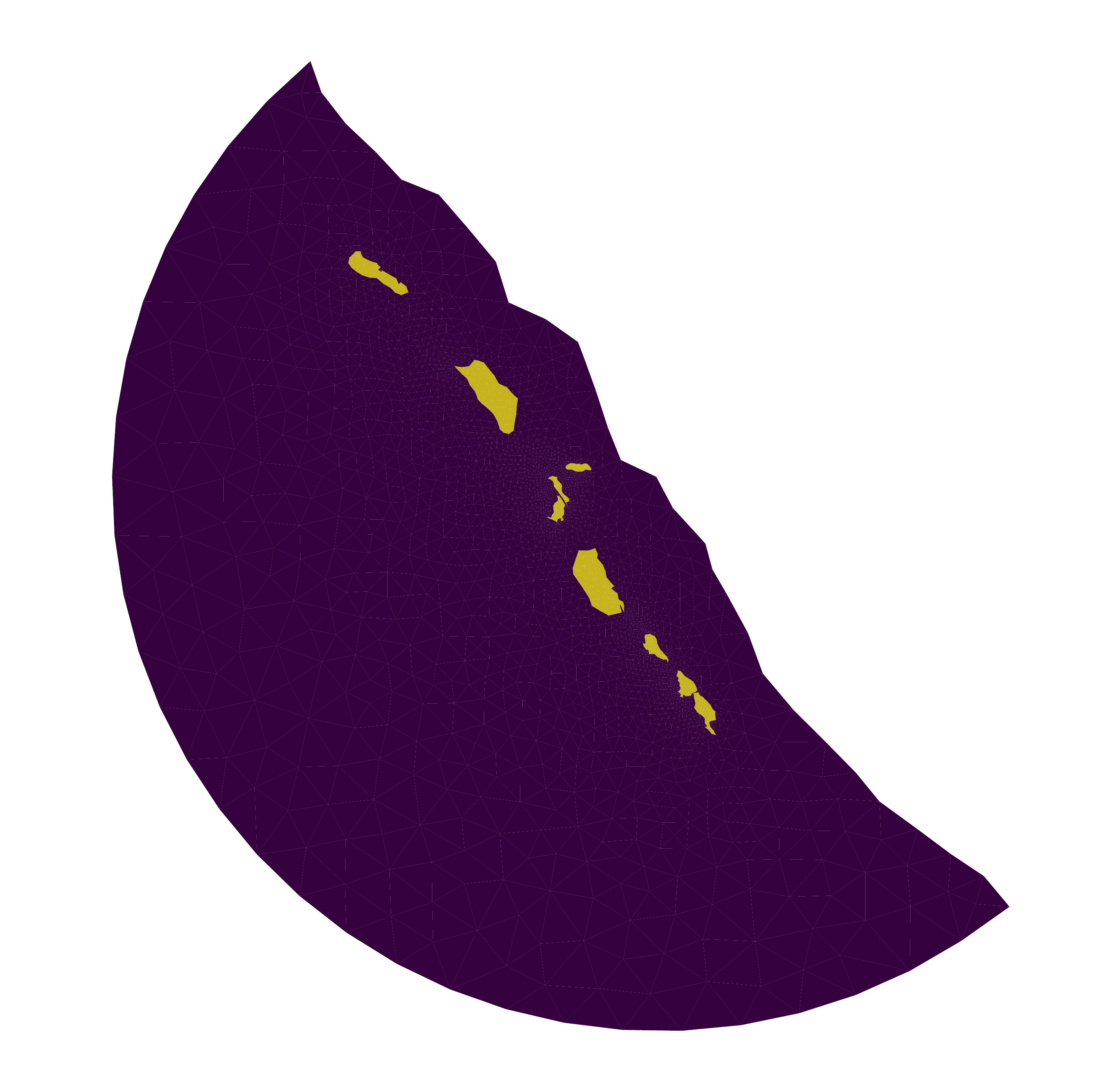}};
	\node[above= -0.1cm of 1] (7) 
	{\tiny Initial Porosity};
	\node[right= -0.1cm of 1] (2) 
	{\includegraphics[scale=0.13]{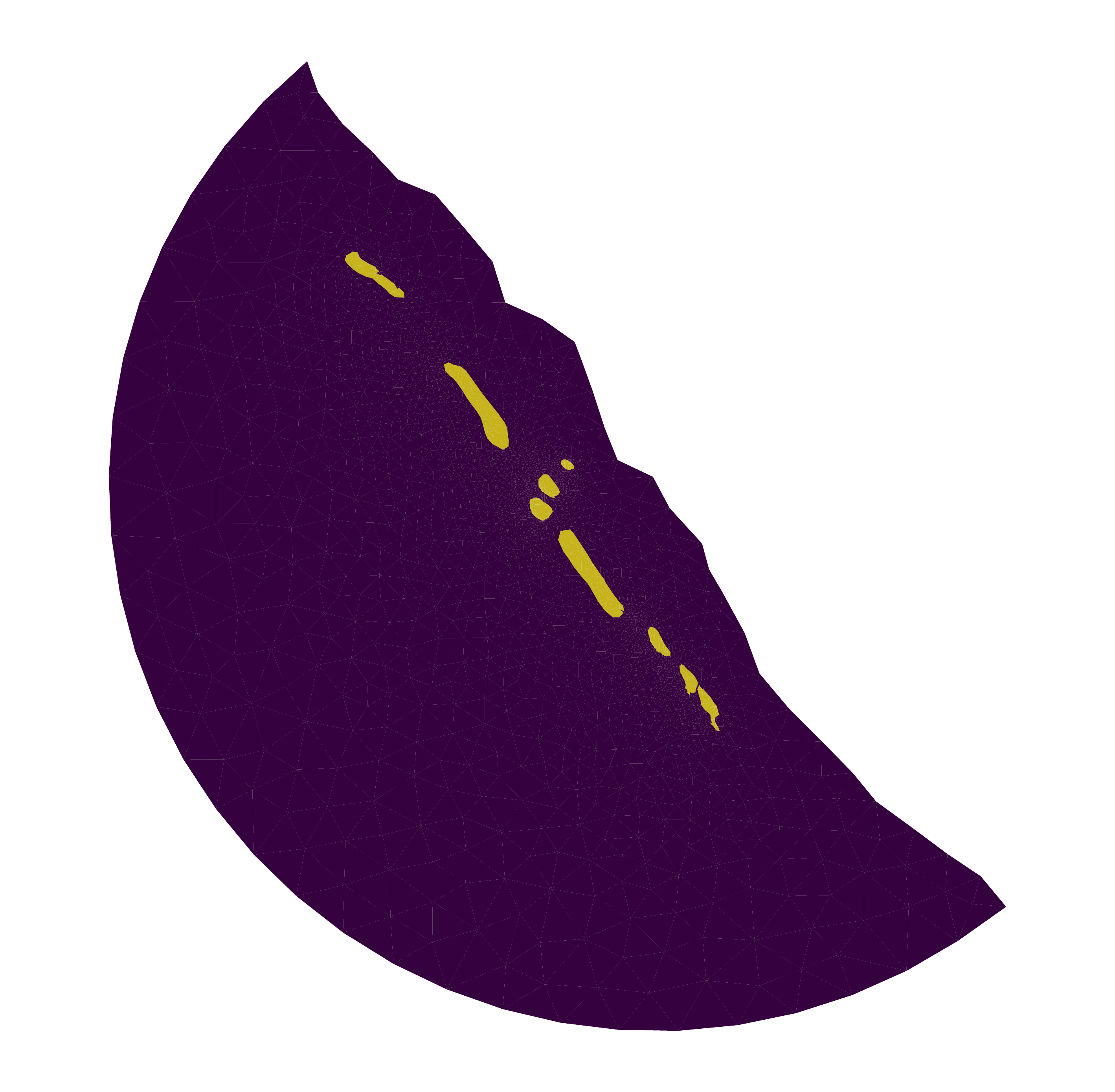}};
	\node[above= -0.2cm of 2] (8) 
	{\tiny Optimized Porosity};
	\node[above left = -1cm and -0.01cm of 1](4)
	{\tiny 1.};
	\node[right = -0.1cm of 2](3)
	{\scalebox{0.48}{%auto-ignore
% This file was created with tikzplotlib v0.9.13.
\begin{tikzpicture}

\begin{axis}[
legend cell align={left},
legend style={fill opacity=0.8, draw opacity=1, text opacity=1, draw=white!80!black},
tick align=outside,
tick pos=left,
title={Objective Value},
x grid style={white!69.0196078431373!black},
xlabel={Iteration},
xmajorgrids,
xmin=-3.4, xmax=71.4,
xtick style={color=black},
y grid style={white!69.0196078431373!black},
ylabel={Objective},
ymajorgrids,
ymin=8.94329071172, ymax=11.877440160045,
ytick style={color=black}
]
\addplot [line width=1.64pt, blue]
table {%
0 11.744069730576
1 11.662870738685
2 11.582659567992
3 11.503471876765
4 11.425341741007
5 11.348302135842
6 11.272385178873
7 11.197622173577
8 11.124043383889
9 11.051677735856
10 10.980552428076
11 10.910692470094
12 10.842120308633
13 10.774855614386
14 10.708915301904
15 10.644313224908
16 10.581060310173
17 10.519164715586
18 10.458631991882
19 10.399465293377
20 10.341665613919
21 10.285232016131
22 10.230161841302
23 10.176451055444
24 10.124094333369
25 10.073085318469
26 10.02341682551
27 9.975081039036
28 9.928069706055
29 9.882374322625
30 9.837986308819
31 9.794897175597
32 9.75309867415
33 9.712582712919
34 9.673342011976
35 9.63536979001
36 9.598659781913
37 9.563206371386
38 9.529004545482
39 9.496049898323
40 9.464338607848
41 9.43386741076
42 9.404633584739
43 9.376634917021
44 9.34986976872
45 9.324336982957
46 9.30003605386
47 9.276967118812
48 9.255131022353
49 9.23452925609
50 9.215164055774
51 9.197038425786
52 9.18015617925
53 9.164521956098
54 9.150141198453
55 9.137020143848
56 9.125165795411
57 9.114585869562
58 9.105288717801
59 9.097283213999
60 9.09057861109
61 9.085184467781
62 9.081110304966
63 9.078365388669
64 9.076958406221
65 9.076897060902
66 9.07672651326
67 9.076661141189
68 9.076709099408
};
\addlegendentry{$J(\Omega)$}
\end{axis}

\end{tikzpicture}}};
	\node[above left = -1cm and -0.01cm of 2](5)
	{\tiny 2.};
	\node[above left = -1cm and -0.01cm of 3](6)
	{\tiny 3.};
	\end{tikzpicture}
	\caption[Ex.1 Mesh, Water Height, Velocities \& Sediment]{1.: Initial Porosity, 2.: Optimized Porosity, 3.: Objective Value per Iteration}
	\label{fig:InitHalfCircle2}
\end{figure}
We can once more observe convergence in the objective, after applying shape optimization on the porous region  (cf. to Figure \ref{fig:InitHalfCircle2}, 2. \& 3. Subfigure). The obstacles are enlarged in perpendicular direction to the incoming sea wave.
\FloatBarrier

\section{Conclusion}
We have investigated porous Shallow Water Equations, where the difference in the material coefficient can be interpreted as a permeable obstacle, that is placed in before shores. We have derived a well-balanced SIP-DG scheme to solve for the evolution of the waves. Based on this solution we have derived the time-dependent continuous adjoint and shape derivative in volume form. Results were tested successfully in two scenarios, where the obstacle's shape has been optimized to target a rest height at the shore. Results can be easily adjusted for arbitrary meshes, objective functions and different wave properties driven by initial and boundary conditions as partially shown in \cite{Schlegel20212}.

\section*{Acknowledgement}
This work has been supported by the Deutsche
Forschungsgemeinschaft within the Priority program SPP 1962 "Non-smooth and Complementarity-based Distributed Parameter Systems: Simulation and Hierarchical Optimization". The authors would like to thank Diaraf Seck (Université Cheikh Anta Diop, Dakar, Senegal) and Mame Gor Ngom (Université Cheikh Anta Diop, Dakar, Senegal) for helpful and interesting discussions within the project Shape Optimization Mitigating Coastal Erosion (SOMICE). 

\bibliographystyle{unsrt}
\bibliography{bibliography}  %%% Uncomment this line and comment out the ``thebibliography'' section below to use the external .bib file (using bibtex) .
\appendix
\section{Derivation of Viscous Porous Momentum}
\label{app:ViscousPorMom}
We hereby follow \cite{Soares2006} and extend for diffusive terms. In addition, we assume a water density $\rho=1$ and remark that the volume $V$ of water in a control volume is given by 
\begin{align}
V=\int_{y_0}^{y_0+\delta y}\int_{x_0}^{x_0+\delta x}\phi H\diff x \diff y
\end{align}
with $x_0, y_0$ being the coordinates of the lower left corner of the control volume.
Following \cite{Soares2006} the momentum balance in $x$-direction can be written as
\begin{align}
M=\frac{\partial M_x}{\partial t}-F_{M,W}+F_{M,E}-F_{M,S}+F_{M,N}-P_W+P_E-W_x-B_x-R_x=0\text{,}
\end{align}
where $F$-terms account for $x$-Momentum fluxes, $P$-terms for pressure forces, such as $W,B$ and $R$-terms for porosity influence such as bottom pressure and friction terms with indices representing the western $W$, eastern $E$ northern $N$ and southern $S$ sides of the control volume. For viscous SWE as in \cite{Agoshkov1994} this momentum balance is extended by the volume diffusion through the western and eastern side as
\begin{align}
D_{W}&=-\int_{y_0}^{y_0+\delta y}\mu\phi  H u_x(x_0,y)\diff y\\
D_{E}&=-\int_{y_0}^{y_0+\delta y}\mu\phi H u_x(x_0+\delta x, y)\diff y 
\end{align}
such as southern and northern sides
\begin{align}
D_{S}&=-\int_{x_0}^{x_0+\delta x}\mu\phi H u_y(x,y_0)\diff x\\
D_{N}&=-\int_{x_0}^{x_0+\delta x}\mu\phi H u_y(x,y_0+\delta y)\diff x \text{,}
\end{align}
where $u_x$ and $u_y$ denotes the first spatial derivatives with respect to $x$ and $y$ and diffusion coefficient $\mu$. Momentum balancing these terms then leads to
\begin{align}
M-D_{W}+D_{E}-D_{S}+D_{N}=0
\label{Mombalance}
\end{align}
Substituting terms in (\ref{Mombalance})
gives 
\begin{equation}
\begin{aligned}
M+&\int_{y_0}^{y_0+\delta y}\mu\phi H u_x(x_0,y)\diff y-\int_{y_0}^{y_0+\delta y}\mu\phi H u_x(x_0+\delta x, y)\diff y+\\
&\int_{x_0}^{x_0+\delta x}\mu\phi H u_y(x,y_0)\diff x-\int_{x_0}^{x_0+\delta x}\mu\phi H u_y(x,y_0+\delta y)\diff x=0\text{.}
\label{Eq:MombalanceInt}
\end{aligned}
\end{equation}
When $\delta x$ and analogously $\delta y$ tend to 0, it holds
\begin{align}
\lim_{\delta x\rightarrow 0}(\mu\phi H u_x)(x_0+\delta x, y)-(\mu\phi H u_x)(x_0, y)=\delta x \frac{\partial}{\partial x}(\mu\phi Hu_x)
\end{align}
such that an evaluation of integrals in (\ref{Eq:MombalanceInt}) yields
\begin{align}
\delta x \delta y M-\delta x \delta y \frac{\partial}{\partial x}(\mu\phi Hu_x)-\delta x \delta y \frac{\partial}{\partial y}(\mu\phi Hu_y)=0\text{,}
\end{align}
which gives the momentum balance as
\begin{align}
M-\frac{\partial}{\partial x}(\mu\phi Hu_x)- \frac{\partial}{\partial y}(\mu\phi Hu_y)=0\text{.}
\end{align}
The y-momentum is derived in accordance.

\newpage
\section{Derivation of Adjoint Equations}\label{app:derivadj}
\begin{proof}
	We perform integration by parts once more on time and spatial derivatives of the weak form (\ref{Eq:17aweak}), where boundaries are denoted as in Section \ref{sec:PF}, to obtain
	\allowdisplaybreaks
	\begin{equation}
	\begin{aligned}
	a(H,\Qv,p,\rv)=&\int_0^T\int_\Om-\frac{\partial p}{\partial t}\phi 
	H\diff x\diff t+\int_\Om \phi\left[H(T,x)p(T,x)-H_0p(0,x)\right]\diff x+\\
	%--------------------------------- flux sediment
	&\int_0^T\int_\Om-\phi\Qv\cdot\nab p\diff x\diff t+\int_0^T\int_{\Gamma_{out}} p\phi\Qv\cdot \nv\diff s\diff t+\\
	%--------------------------------- flux boundary interface
	&\int_0^T\int_{\Ge} [\![p\phi\Qv\cdot \nv]\!]\diff s\diff t+\\
	%--------------------------------- flux sediment
	&\int_0^T\int_\Om-(H+z)\nab\cdot(\mu_v\nab p)\diff x\diff t+\\
	%--------------------------------- viscous sediment
	&\int_0^T\int_{\Gamma_{out}}\left[\mu_v(H+z)\nab p\cdot \nv-p\mu_v\nab (H+z)\cdot \nv\right]\diff s\diff t+\\
	%--------------------------------- viscous sediment interface
	&\int_0^T\int_{\Ge}\left[[\![\mu_v\nab(H+z)\cdot\nv p]\!]-[\![\mu_v(H+z)\nab p\cdot\nv]\!]\right]\diff s\diff t+\\
	%--------------------------------- time momentum
	&\int_0^T\int_\Om-\frac{\partial \rv}{\partial t}\cdot(\phi\Qv)\diff x\diff t+\int_\Om \phi\left[\Qv(T,x)\cdot\rv(T,x)-\Qv_0\cdot\rv(0,x)\right]\diff x+\\
	%--------------------------------- 1. flux momentum source terms
	&\int_0^T\int_\Om-\phi\frac{\Qv}{H}\cdot\nab\rv\cdot\Qv\diff x\diff t+\int_0^T\int_{\Gamma_{out}} \phi\frac{\Qv}{H}\cdot\rv\Qv\cdot \nv\diff s\diff t+\\
	%--------------------------------- 2. flux momentum
	&\int_0^T\int_\Om-\frac{1}{2}g\phi H^2\nab\cdot\rv\diff x\diff t+\int_0^T\int_{\Gamma_{out}} \frac{1}{2}g\phi H^2\rv\cdot \nv\diff s\diff t+\\
	%--------------------------------- 2. flux momentum interface boundary
	&\int_0^T\int_{\Ge} [\![\phi\frac{\Qv}{H}\cdot\rv\Qv\cdot \nv]\!]\diff s\diff t+\int_0^T\int_{\Ge} [\![\frac{1}{2}g\phi H^2\rv\cdot \nv]\!]\diff s\diff t+\\
	%--------------------------------- 2. flux momentum
	&\int_0^T\int_\Om-\Qv\cdot
	\nab\cdot(\mu_f\phi\nab \rv)\diff x\diff t+\int_0^T\int_\Om g\phi H\nab z \cdot\rv\diff x\diff t+\\
	%--------------------------------- viscous momentum
	&\int_0^T\int_\Gamma\left[\mu_f\phi\Qv\cdot\nab \rv\cdot \nv-\rv\cdot (\mu_f\phi \nab \Qv)\cdot \nv\right]\diff s\diff t+\\
	%--------------------------------- 
	&\int_0^T\int_{\Ge}\left[[\![\mu_f\phi \nab(\Qv)\cdot\nv\cdot\rv]\!]-[\![\mu_f\phi \nab\rv\cdot\Qv\cdot\nv]\!]\right]\diff s\diff t-\\
	%--------------------------------- 2. porous source
	&\int_0^T\int_\Om g\frac{1}{2}H^2\nab\phi\cdot\rv\diff s\diff t\text{.}
	\end{aligned}
	\label{App:Step1Adjoint}
	\end{equation}
	\allowdisplaybreaks
	Using the jump identity $[\![ab]\!]=\{\!\{a\}\!\}[\![b]\!]+\{\!\{b\}\!\}[\![a]\!]$ on boundary integrals over the interface $\Ge$ and inserting Boundary Conditions (\ref{Eq:BCSWE}) on $\Ga$ and $\Gd$ for terms that arise from the diffusive fluxes lead to
	\allowdisplaybreaks
	\begin{equation}
	\allowdisplaybreaks
	\begin{aligned}
	\allowdisplaybreaks
	a(H,\Qv,p,\rv)=&\int_0^T\int_\Om-\frac{\partial p}{\partial t}\phi H\diff x\diff t+\int_\Om \phi\left[H(T,x)p(T,x)-H_0p(0,x)\right]\diff x-\\
	%--------------------------------- flux continuity
	&\int_0^T\int_\Om\phi\Qv\cdot\nab p\diff x\diff t+\int_{0}^T\int_{\Gd}\phi p\Qv\cdot\nv\diff s\diff t+	\\
	%---------------------------------
	&\int_0^T\int_{\Ge} [\![p\phi\Qv\cdot \nv]\!]\diff s\diff t-\int_0^T\int_\Om\frac{1}{2}g\phi H^2\nab\cdot\rv\diff x\diff t-\\
	%--------------------------------- viscous continuity
	&\int_0^T\int_\Om(H+z)\nab\cdot(\mu_v\nab p)\diff x\diff t+\int_0^T\int_{\Ga}\mu_v(H+z)\nab p\cdot \nv\diff s\diff t+\\
	&\int_0^T\int_{\Gd}\mu_vH_1\nab p\cdot \nv\diff s\diff t+\int_0^T\int_{\Gd}-p\mu_v\nab (H_1+z)\cdot \nv\diff s\diff t+\\
	%--------------------------------- viscous continuity
	&\int_0^T\int_{\Ge}\left[\{\!\{\mu_v\nab(H+z)\cdot\nv\}\!\}[\![p]\!]-\{\!\{H+z\}\!\}[\![\mu_v\nab p\cdot\nv]\!]\right]\diff s\diff t-\\
	%--------------------------------- time momentum
	&\int_0^T\int_\Om\frac{\partial \rv}{\partial t}\cdot(\phi\Qv)\diff x\diff t+\int_\Om\phi \left[\Qv(T,x)\cdot\rv(T,x)-\Qv_0\cdot\rv(0,x)\right]\diff x-\\
	%--------------------------------- 1. flux momentum
	&\int_0^T\int_\Om\phi\frac{\Qv}{H}\cdot\nab\rv\cdot\Qv\diff x\diff t+\int_0^T\int_{\Gd} \phi\frac{\Qv}{H_1}\cdot\rv\Qv\cdot \nv\diff s\diff t+\\
	%--------------------------------- 2. flux momentum
	&\int_0^T\int_{\Ga} \frac{1}{2}g\phi H^2\rv\cdot \nv\diff s\diff t+\int_0^T\int_{\Gd} \frac{1}{2}g\phi H_1^2\rv\cdot \nv\diff s\diff t+\\
	%--------------------------------- 2. flux momentum interface boundary
	&\int_0^T\int_{\Ge} [\![\phi\frac{\Qv}{H}\cdot\rv\Qv\cdot \nv]\!]\diff s\diff t+\int_0^T\int_{\Ge} [\![\frac{1}{2}g\phi H^2\rv\cdot \nv]\!]\diff s\diff t-\\
	%--------------------------------- viscous momentum
	&\int_0^T\int_\Om (\Qv)\cdot
	\nab\cdot(\mu_f\phi\nab \rv)\diff x\diff t+\int_0^T\int_{\Ga,\Gd}\mu_f\phi \Qv\nab \rv\cdot\nv\diff s\diff t+\\
	%--------------------------------- viscous momentum interface
	&\int_0^T\int_{\Ge}\left[\{\!\{\mu_f\phi \nab(\Qv)\cdot\nv\}\!\}\cdot[\![\rv]\!]-\{\!\{\Qv\}\!\}\cdot[\![\mu_f\phi \nab\rv\cdot\nv]\!]\right]\diff s\diff t+\\
	%--------------------------------- source terms and additional for spacing
	&\int_0^T\int_\Om g\phi H\nab z\cdot\rv\diff x\diff t-\int_0^T\int_\Om g\frac{1}{2}H^2\nab\phi\cdot\rv\diff x\diff t\text{.}
	\end{aligned}
	\end{equation}
	Differentiating for the state variable $H$ leads to
	\begin{equation}
	\begin{aligned}
	\frac{\partial a(H,\Qv,p,\rv)}{\partial H}=&\int_0^T\int_\Om-\frac{\partial \phi p}{\partial t}\diff x\diff t+\int_\Om \phi p(T,x)\diff x+\\
	%--------------------------------- viscous sediment
	&\int_0^T\int_\Om-\nab\cdot(\mu_v\nab p)\diff x\diff t+\int_0^T\int_{\Ga}\left[\mu_v\nab p\cdot \nv\right]\diff s\diff t+\\
	%--------------------------------- viscous interface
	&\int_0^T\int_{\Ge}\left[\{\!\{\mu_v\nab(H+z)\cdot\nv\}\!\}_H[\![p]\!]-\{\!\{H+z\}\!\}_H[\![\mu_v\nab p\cdot\nv]\!]\right]\diff s\diff t-\\
	%--------------------------------- 1. flux momentum
	&\int_0^T\int_\Om\phi\frac{\Qv}{H^2}\cdot\nab\rv\cdot\Qv\diff x\diff t-\\
	%--------------------------------- 2. flux momentum interface boundary
	&\int_0^T\int_{\Ge} [\![\phi\frac{\Qv}{H^2}\cdot\rv\Qv\cdot \nv]\!]\diff s\diff t+\int_0^T\int_{\Ge} [\![g\phi H\rv\cdot \nv]\!]\diff s\diff t-\\
	%--------------------------------- 2. flux momentum
	&\int_0^T\int_\Om-g\phi H\nab\cdot\rv\diff x\diff t+\int_0^T\int_{\Ga} g\phi H\rv\cdot \nv\diff s\diff t+\\
	%--------------------------------- source terms
	&\int_0^T\int_\Om g\phi\nab z\cdot\rv\diff x\diff t-\int_0^T\int_\Om gH\nab\phi\cdot\rv\diff x\diff t
	\label{adjderh}
	\end{aligned}
	\end{equation}
	and w.r.t. $\Qv$ to
	\begin{equation}
	\begin{aligned}
	%--------------------------------- time momentum
	\frac{\partial a(H,\Qv,p,\rv)}{\partial \Qv}=		&\int_0^T\int_\Om-\frac{\partial \phi\rv}{\partial t}\diff x\diff t+\int_\Om \phi\rv(T,x)\diff x-\\
	%--------------------------------- flux sediment
	&\int_0^T\int_\Om\phi\nab p\diff x\diff t+\int_0^T\int_{\Gd}\phi p\nv\diff s\diff t+\int_0^T\int_{\Ge} [\![p\phi\nv]\!]\\
	%--------------------------------- 1. flux momentum
	&\int_0^T\int_\Om-\phi\frac{1}{H}(\nab\rv)^T\Qv-\frac{1}{H}(\Qv\cdot\nab)\rv\Qv\diff x\diff t+\\
	%--------------------------------- 1. flux momentum boundary terms
	&\int_0^T\int_{\Gd} \frac{\phi}{H_1}(\Qv\cdot \nv)\rv\diff s\diff t+\int_0^T\int_{\Gd} \frac{\phi}{H_1}(\Qv\rv) \cdot \nv\diff s\diff t+\\
	%--------------------------------- 2. flux momentum interface boundary
	&\int_0^T\int_{\Ge} [\![\frac{\phi}{H}(\Qv\cdot \nv)\rv]\!]\diff s\diff t+\int_0^T\int_{\Ge} [\![ \frac{\phi}{H}(\Qv\rv) \cdot \nv]\!]\diff s\diff t-\\
	%--------------------------------- viscous momentum
	&\int_0^T\int_\Om\nab\cdot(\mu_f\phi \nab\rv)\diff x\diff t+\int_0^T\int_{\Ga,\Gd}\mu_f\phi \nab\rv\nv\diff s\diff t+\\
	%--------------------------------- viscous interface
	&\int_0^T\int_{\Ge}\left[\{\!\{\mu_f\phi \nab(\Qv)\nv\}\!\}_{\Qv} [\![\rv]\!]-\{\!\{\Qv\}\!\}_{\Qv}[\![\mu_f\phi \nab \rv\nv]\!]\right]\diff s\diff t\text{,}
	\label{adjderq}
	\end{aligned}
	\end{equation}
where the subscript denotes differentiation for the respective state variable.	
Now if $\frac{\partial a(H,\Qv,p,\rv)}{\partial U}=-\frac{\partial J_1}{\partial U}$ then $\frac{\partial\mathcal{L}}{\partial U}=0$ is fulfilled. From this we get the  adjoint equations in strong form (\ref{Eq:19Adjoint1}) and (\ref{Eq:19Adjoint2}) with boundary conditions from equating boundary terms to zero in (\ref{adjderh}) and (\ref{adjderq}). 	
\end{proof}

\section{Derivation of Shape Derivative}\label{app:derivsha}
\begin{proof}
	We regard the Lagrangian (\ref{Eq:16Lag}). As in \cite{Schulz2014b}, the theorem of Correa and Seger \cite{Correa1985} is applied on the right hand side of 
	\begin{align}
	J_1(\Om)=\min_{U}\max_{P}\Lag(\Om,U,P)
	\label{Eq:191Saddle}
	\end{align}
	The assumptions of this theorem can be verified as in \cite{Delfour2011}. We now use the definition of the shape derivative (\ref{Eq:7EulerDer}) in terms of the Lagrangian, i.e.
	\allowdisplaybreaks
	\begin{align*}
	D&\mathcal{L}(\Om,U,P)[\Vv]\\
	%---------------------------------------------
	&=\lim_{\eps\rightarrow 0^+}\frac{\mathcal{L}(\Om_\eps;U,P)-\mathcal{L}(\Om;U,P)}{\eps}\\
	%---------------------------------------------
	&=\frac{d}{d\eps}\restr{\mathcal{L}(\Om_\eps,U,P)}{\eps=0^+}=\frac{d}{d\eps}\restr{\mathcal{L}(\Om_\eps,H,\Qv,p,\rv)}{\eps=0^+}
	\end{align*}
	%--------------------------------------------- Insert Material Derivative 1.1. td H
	and apply the rule for differentiating domain integrals (\ref{Eq:13DoaminR}), where we split integrals for readability in to be added domain part, i.e.
\begin{align*}		
	&\int_\Om\Big[\int_0^T-D_m\left(\frac{\partial p}{\partial t}\phi H\right)\diff t+ D_m\left(\phi H(T,x)p(T,x)-\phi H_0p(0,x)\right)+\\
	%--------------------------------------------- 1.1 td Q 
	&\quad\int_0^T-D_m\left(\frac{\partial \rv}{\partial t}\cdot\phi\Qv\right)\diff t+ D_m\left(\phi\Qv(T,x)\cdot\rv(T,x)-\phi\Qv_0\cdot\rv(0,x)\right)+\\
	%--------------------------------------------- 
	&\quad\int_0^T D_m\left(\nab\cdot(\phi\Qv) p\right)\diff t+\int_0^TD_m\left(\mu_v\nab (H+z)\cdot\nab p\right)\diff t+\\
	%---------------------------------------------
	&\quad\int_{0}^TD_m\left(\nab\cdot\left(\phi\frac{\Qv}{H}\otimes \Qv\right)\cdot\rv\right)\diff t+
	\int_0^T+D_m\left(\frac{1}{2}g\nab (\phi H^2)\cdot\rv\right)\diff t+\\
	%---------------------------------------------
	&\quad\int_0^TD_m\left(\phi\mu_f \nab \Qv:\nab\rv\right)\diff t+\int_0^TD_m\left(gH\nab z\cdot\rv\right)\diff t+\\
	%---------------------------------------------
	&\quad\int_0^T-D_m\left(\frac{1}{2}gH^2\nab \phi\cdot\rv\right)\diff t+
	\\
	%--------------------------------------------- div terms
	&\quad div(\Vv)\Big(\int_0^T-\frac{\partial p}{\partial t}\phi H\diff t+\phi H(T,x)p(T,x)-\phi H_0p(0,x)+\\
	%--------------------------------------------- 
	&\quad\int_0^T-\frac{\partial \rv}{\partial t}\cdot(\phi\Qv)\diff t+ \phi\Qv(T,x)\cdot\rv(T,x)-\phi\Qv_0\cdot\rv(0,x)+\int_0^T \nab\Qv\cdot p\diff t+\\
	%---------------------------------------------
	&\quad\int_0^T\mu_v\nab (H+z)\cdot\nab p\diff t+\quad\int_{0}^T\nab\cdot\left(\phi\frac{\Qv}{H}\otimes \Qv\right)\cdot\rv\diff t+\\
	%---------------------------------------------
	&\quad\int_0^T+\frac{1}{2}g\nab(\phi H^2)\cdot\rv\diff t+\int_0^T\phi\mu_f \nab \Qv:\nab\rv\diff t+\int_0^Tg\phi H\nab z\cdot\rv\diff t-\\
	%---------------------------------------------
	&\quad\int_0^Tg H^2\nab\phi\cdot\rv\diff t
	\Big)\Big]\diff x
	\end{align*}
	and interior such as exterior boundary part, i.e.
	\begin{align*}
	%--------------------------------------------- Boundary Terms
	&\int_{\Ga}\Big[\frac{1}{2}\int_0^TD_m\left([N(\hat{U}(t,x)-\bar{U}(t,x))]^2\right)+div_{\Ga}(\Vv)[N(\hat{U}(t,x)-\bar{U}(t,x))]^2\diff t\Big]\diff s-\\
	%-------------------------------------------- 
	&\quad\int_{\Gd}\Big[\int_0^TD_m\left(\mu_v\nab (H_1+z) \cdot \nv p\diff t\right)+ div_{\Gd}(\Vv)\Big(\int_0^T\mu_v\nab (H_1+z)\cdot \nv p\diff t\Big)\Big]\diff s-\\
	%---------------------------------------------
	&\quad\int_{\Ge}\Big[\int_0^TD_m\left( [\![\mu_v\nab (H+z) \cdot \nv p]\!]\right)\diff t+\int_0^TD_m\left([\![\phi\mu_f\nab\Qv \cdot \nv\cdot\rv]\!]\right)\diff t+\\
	&\quad div_{\Ge}(\Vv)\Big(\int_0^T[\![-\mu_v\nab(H+z)\cdot \nv p]\!]\diff t+\int_0^T[\![-\phi\mu_f\nab \Qv\cdot \nv\cdot\rv]\!]\diff t\Big)\Big]\diff s\text{,}
	\end{align*}
	where $div_{\Gamma}(\Vv)=div(\Vv)-\nv\cdot(\nab\Vv)\nv$ is the tangential divergence of the vector field $\Vv$ for the respective boundary normal $\nv$.
	Now the product rule (\ref{Eq:10MatProdR}) yields respectively for the domain part
	\allowdisplaybreaks
	\begin{align*}
	%--------------------------------------------- Insert Product Rule and so on
	\quad\quad\quad=&\int_\Om\Big[\int_0^T-D_m\left(\frac{\partial p}{\partial t}\right)\phi H-\frac{\partial p}{\partial t}D_m(\phi H)\diff t+ \\
	%---------------------------------------------
	&D_m(\phi H(T,x))p(T,x)+H(T,x)\dot{p}(T,x)-\phi H_0\dot{p}(0,x)+\\
	%--------------------------------------------- 1.1 td Q 
	&\int_0^T-D_m\left(\frac{\partial \rv}{\partial t}\right)\cdot(\phi\Qv)-\frac{\partial \rv}{\partial t}\cdot D_m(\phi\Qv)\diff t+ D_m(\Qv(T,x))\cdot\rv(T,x)+\\
	%--------------------------------------------- 
	&\phi\Qv(T,x)\cdot\dot{\rv}(T,x)-\phi\Qv_0\cdot\dot{\rv}(0,x)+
	\int_0^T \dot{p}\cdot\nab (\phi\Qv)+p\cdot D_m(\nab (\phi\Qv))\diff t+\\
	%--------------------------------------------- 
	&\int_0^T\left(\mu_vD_m(\nab (H+z))\cdot\nab p+\mu_v\nab (H+z)\cdot D_m(\nab p)\right)\diff t-\\
	%---------------------------------------------
	&\int_{0}^TD_m\left(\nab\cdot\left(\phi\frac{\Qv}{H}\otimes \Qv\right)\right)\cdot\rv\diff t+\int_{0}^T\nab\cdot\left(\phi\frac{\Qv}{H}\otimes \Qv\right)\cdot D_m\left(\rv\right)\diff t+\\
	%---------------------------------------------
	& \int_0^T\left(\frac{1}{2}gD_m(\nab (\phi H^2))\cdot\rv+\frac{1}{2}g\nab (\phi H^2)\cdot D_m(\rv)\right)\diff t+
	\\
	%---------------------------------------------
	&\int_0^T\left(D_m\left(\phi\mu_f \nab \Qv\right):\nab\rv+\phi\mu_f \nab \Qv:D_m\left(\nab\rv\right)\right)\diff t+\\
	%---------------------------------------------
	&\int_0^TgD_m(\phi H)\nab z\cdot\rv\diff t+\int_0^Tg\phi HD_m\left(\nab z\right)\cdot\rv\diff t+\int_0^Tg\phi H\nab z\cdot\dot{\rv}\diff t-\\
	%---------------------------------------------
	&\int_0^T\frac{1}{2}gD_m(H^2)\nab \phi\cdot\rv\diff t-\int_0^T\frac{1}{2}g H^2D_m\left(\nab \phi\right)\cdot\rv\diff t-\int_0^T\frac{1}{2}gH^2\nab \phi
	\cdot\dot{\rv}\diff t+\\
	%--------------------------------------------- div terms
	& div(\Vv)\Big(\int_0^T-\frac{\partial p}{\partial t}\phi H\diff t+\phi H(T,x)p(T,x)-\phi H_0p(0,x)+\\
	%--------------------------------------------- 
	&\int_0^T-\frac{\partial \rv}{\partial t}\cdot(\phi\Qv)\diff t+ \phi\Qv(T,x)\cdot\rv(T,x)-\phi\Qv_0\cdot\rv(0,x)+\int_0^T \nab\Qv\cdot p\diff t+\\
	%---------------------------------------------
	&\int_0^T\mu_v\nab (H+z)\cdot\nab p\diff t+\int_{0}^T\nab\cdot\left(\phi\frac{\Qv}{H}\otimes \Qv\right)\cdot\rv\diff t+\\
	%---------------------------------------------
	&\int_0^T\frac{1}{2}g\nab(\phi H^2)\cdot\rv\diff t+\int_0^T\phi\mu_f \nab \Qv:\nab\rv\diff t+\int_0^Tg\phi H\nab z\cdot\rv\diff t-\\
	%---------------------------------------------
	&\int_0^Tg H^2\nab\phi\cdot\rv\diff t
	\Big)\Big]\diff x
	\end{align*}
	and the boundary part
	\begin{align*}
	&\int_{\Ga}\Big[\int_0^T[N(\hat{U}(t,x)-\bar{U}(t,x))]\cdot\dot{\hat{U}}\diff t+\\
	&div_{\Ga}(\Vv)\Big(\int_0^T[N(\hat{U}(t,x)-\bar{U}(t,x))]^2\diff t\Big)\Big]\diff s+\\
	%--------------------------------------------- Boundary Terms
	&\int_{\Gd}\Big[\int_0^T-\mu_v\nab (H_1+z) \cdot \nv \dot{p}\diff t+\\
	&div_{\Gd}(\Vv)\Big(\int_0^T-\mu_v\nab (H_1+z)\cdot \nv p\diff t\Big)\Big]\diff s+\\
	%---------------------------------------------
	&\int_{\Ge}\Big[\int_0^T [\![-\mu_vD_m\left(\nab (H+z)\right) \cdot \nv p-\mu_v\nab (H+z) \cdot \nv \dot{p}]\!]\diff t+\\
	%---------------------------------------------
	&\int_0^T[\![-\phi\mu_f D_m(\nab\Qv \cdot \nv)\cdot\rv-\phi\mu_f \nab\Qv \cdot \nv\cdot\dot{\rv}]\!]\diff t+\\
	%---------------------------------------------
	& div_{\Ge}(\Vv)\Big(\int_0^T[\![-\mu_v\nab(H+z)\cdot \nv p]\!]\diff t+\int_0^T[\![-\phi\mu_f \nab \Qv\cdot \nv\cdot\rv]\!]\diff t\Big)\Big]\diff s\text{.}
	\end{align*}
	The combination of both integrals, the non-commuting of material and spatial derivatives (\ref{Eq:11MatGradR}), (\ref{Eq:11MatGradRvec}) and (\ref{Eq:12MatGradProdR}), integration by parts combined with the fact that sediment and porosity move alongside with the deformation, which ultimately lets the material derivative vanish, such as finally regrouping for the material derivatives of the state $U=(H,\Qv)$ and adjoint variables $P=(p,\rv)$, lead to three parts, where firstly
	\allowdisplaybreaks
	\begin{align*}
	%--------------------------------------------- Part that vanishes due to adjoint for H derivative
	&\int_{\Ga}\int_0^T[N(\hat{U}(t,x)-\bar{U}(t,x))]\cdot\dot{U}\diff t\diff s+\int_\Om\int_0^T\Big[\\
	&\left(-\phi\frac{\partial p}{\partial t}+\frac{\phi}{H^2}(\Qv\cdot\nab)\rv\cdot\Qv-g\phi H(\nab\cdot\rv) -\nab\cdot(\mu_v\nab p)+g\phi\nab z\cdot \rv\right)\dot{H}+\\
	%--------------------------------------------- Part that vanishes due to adjoint for Q derivative
	&\left(-\phi\frac{\partial\rv}{\partial t}-\nab p-\frac{\phi}{H}(\Qv\cdot\nab)\rv-\frac{\phi}{H}(\nab\rv)^T\Qv-(\nab\cdot(\phi\mu_f \nab \rv))\right)\cdot\dot{\Qv}+\\	
	%--------------------------------------------- Part that vanishes due to state equation for mass conservation
	&\left(\phi\frac{\partial H}{\partial t}+\nab\cdot\left(\phi\Qv-\mu_v\nab (H+z)\right)\right)\dot{p}+\\
	%---------------------------------Part that vanishes due to state equation for moment conservation
	&\left(\phi\frac{\partial \Qv}{\partial t}+\nab\cdot\left(\phi\frac{\Qv}{H}\otimes \Qv+\frac{1}{2}g\phi H^2\mathbf{I}-\phi\mu_f \nab\Qv\right)+g\phi H\nab z\right)\cdot\dot{\rv}\diff t\Big]\diff x
	\end{align*}
	vanishes due to an evaluation the Lagrangian in its saddle point and secondly
		%--------------------------------------------- Boundary Terms
	\begin{align*}	
	&\int_{\Ga}\int_0^T\left[div_{\Ga}(\Vv)[N(\hat{U}(t,x)-\bar{U}(t,x))]^2\right]\diff t\diff s+\\
	%---------------------------------------------
	%---------------------------------------------
	&\int_{\Ge}\Big[\int_0^T\Big([\![\frac{\dot{\phi\Qv}}{H}\cdot\rv\Qv\cdot \nv+\frac{\phi\Qv}{H}\cdot\rv\dot{\Qv}\cdot \nv+p\dot{\phi\Qv}\cdot\nv+\\
	%---------------------------------------------
	& \int_0^T\frac{1}{2}gD_m(\phi H^2)\rv\cdot\nv
	]\!]\Big)\diff t\Big]\diff s+\\
	%---------------------------------------------
	& div_{\Ge}(\Vv)\Big(\int_0^T[\![-\mu_v\nab(H+z)\cdot \nv p]\!]\diff t+\int_0^T[\![-\phi\mu_f \nab \Qv\cdot \nv\cdot\rv]\!]\diff t\Big)\Big]\diff s
	\end{align*}
	vanishes since on the one hand outer boundaries are not variable and hence the deformation field $\Vv$ vanishes in small neighbourhoods around $\Ga, \Gd$ such that the material derivative is zero and on the other due the continuity of state and fluxes corresponding material derivatives are continuous. Finally, this leaves us with the shape derivative in its final form (\ref{Eq:21SD})
	\begin{align*}
	%---------------------------------Part that doesnt vanish for Shape Derivative
	DJ_1(\Om)[\Vv]=&\int_{0}^{T}\int_{\Om}\Big[-(\nab \Vv)^T:\nab (\phi\Qv) p - (\nab \Vv)^T:\nab \Qv\frac{\phi\Qv}{H}\cdot \rv- \\
	&(\nab \Vv\Qv\cdot\nab)\frac{\phi\Qv}{H}\cdot \rv -gH(\nab \Vv)^T\nab (\phi H) \cdot \rv-\\
	&\mu_v\nab (H+z)^T(\nab \Vv +\nab \Vv^T)\nab p - \\
	&\phi\mu_f \nab \Qv \nab \Vv:\nab \rv -\phi\mu_f \nab\Qv\nab \Vv^T:\nab\rv-  \\
	&g\phi H\nab \Vv^T\nab z\cdot\rv+\frac{1}{2}
	gH^2\nab \Vv^T\nab \phi\cdot\rv\\
	&div(\Vv)\{\frac{\partial \phi H}{\partial t}p+\nab\cdot (\phi\Qv) p+\frac{\partial \phi\Qv}{\partial t}\cdot \rv+\\
	& \phi(\Qv\cdot \nab)\frac{\Qv}{H}\cdot \rv +\nab\cdot (\phi\Qv)\frac{\Qv}{H}\cdot \rv +  \frac{1}{2}g\nab(\phi H^2)\cdot \rv +\\
	&g\phi H\nab z\cdot\rv+\mu_v\nab (H+z)\cdot\nab p +\\
	&\phi\mu_f \nab \Qv : \nab \rv -g\frac{1}{2}H^2\nab \phi\cdot\rv\}\Big]\diff x\diff t\\
	\end{align*}		
\end{proof}
\section{Derivation of DG Scheme for Interface Conditions}
\label{app:DGforInterface}
The porous SWE (\ref{Eq:SWE}) together with interface conditions on $\Ge$ can be resolved in an SIP-DG scheme.  
Starting from the weak form (\ref{Eq:17aweak}) and integrating by parts once more on the advective terms, in addition to once more using the jump identity $[\![ab]\!]=\{\!\{a\}\!\}[\![b]\!]+\{\!\{b\}\!\}[\![a]\!]$ together with flux continuity for the diffusive and advective flux we obtain 
\allowdisplaybreaks
\begin{equation}
\begin{aligned}
\allowdisplaybreaks
a(H,\Qv,p,\rv)=&\int_0^T\int_\Om-\frac{\partial p}{\partial t}\phi 
H\diff x\diff t+\int_\Om \phi\left[H(T,x)p(T,x)-H_0p(0,x)\right]\diff x+\\
%--------------------------------- flux sediment
&\int_0^T\int_\Om-\phi\Qv\cdot\nab p\diff x\diff t+\int_0^T\int_{\Gamma_{out}} p\phi\Qv\cdot \nv\diff s\diff t+\\
%--------------------------------- flux boundary interface
&\int_0^T\int_{\Ge} \{\!\{\phi\Qv\cdot\nv\}\!\} [\![p]\!]\diff s\diff t+\\
%--------------------------------- flux sediment
&\int_0^T\int_\Om\mu_v\nab (H+z)\cdot\nab p\diff x\diff t-\\
%--------------------------------- viscous sediment
&\int_0^T\int_{\Gamma_{out}}\left[p\mu_v\nab (H+z)\cdot \nv\right]\diff s\diff t-\\
%--------------------------------- viscous sediment interface
&\int_0^T\int_{\Ge}\left[\{\!\{\mu_v\nab(H+z)\cdot\nv\}\!\}[\![p]\!]\right]\diff s\diff t-\\
%--------------------------------- time momentum
&\int_0^T\int_\Om-\frac{\partial \rv}{\partial t}\cdot(\phi\Qv)\diff x\diff t+\int_\Om \phi\left[\Qv(T,x)\cdot\rv(T,x)-\Qv_0\cdot\rv(0,x)\right]\diff x+\\
%--------------------------------- 1. flux momentum source terms
&\int_0^T\int_\Om-\phi\frac{\Qv}{H}\cdot\nab\rv\cdot\Qv\diff x\diff t+\int_0^T\int_{\Gamma_{out}} \phi\frac{\Qv}{H}\cdot\rv\Qv\cdot \nv\diff s\diff t+\\
%--------------------------------- 2. flux momentum
&\int_0^T\int_\Om-\frac{1}{2}g\phi H^2\nab\cdot\rv\diff x\diff t+\int_0^T\int_{\Gamma_{out}} \frac{1}{2}g\phi H^2\rv\cdot \nv\diff s\diff t+\\
%--------------------------------- 2. flux momentum interface boundary
&\int_0^T\int_{\Ge}\{\!\{\phi(\frac{\Qv}{H}\otimes\Qv)\nv\}\!\} \cdot[\![\rv]\!]\diff s\diff t+\int_0^T\int_{\Ge}\{\!\{\frac{1}{2}g\phi H^2\nv\}\!\}\cdot[\![\rv]\!]\diff s\diff t+\\
%--------------------------------- 2. flux momentum
&\int_0^T\int_\Om \mu_f\phi\nab\Qv:\nab\rv\diff x\diff t+\int_0^T\int_\Om g\phi H\nab z \cdot\rv\diff x\diff t-\\
%--------------------------------- viscous momentum
&\int_0^T\int_\Gamma\left[\rv\cdot \mu_f\phi\nab \Qv\cdot \nv\right]\diff s\diff t-\\
%--------------------------------
&\int_0^T\int_{\Ge}\left[\{\!\{\mu_f\phi\nab\Qv\cdot\nv\}\!\}\cdot[\![\rv]\!]\right]\diff s\diff t-\\
%--------------------------------- 2. porous source
&\int_0^T\int_\Om g\frac{1}{2}H^2\nab\phi\cdot\rv\diff s\diff t\text{.}
\end{aligned}
\end{equation}

Since this derivation does not make use of the continuity of the solution (\ref{BCSWEfirst})-(\ref{BCSWEfirst_2}), we weakly enforce it by adding 
\begin{align}
\int_{0}^{T}\int_{\Gamma_3}\underline{\delta}(\hat{U}):\underline{[\![P]\!]}\diff s\diff t
\end{align}
for
\begin{equation}
\begin{aligned}
\underline{\delta}(\hat{U})=C_{IP}\frac{p^2}{h}\{\!\{G(f(\phi,\mu))\}\!\}\underline{[\![\hat{U}]\!]}\text{.}
\end{aligned}
\label{Eq:27viscousflux}
\end{equation}
In addition, it appears natural to symmetrize the diffusive part by
\begin{align}
-\int_{0}^{T}\int_{\Gamma_3}\{\!\{G(f(\phi,\mu))\nab (P)\}\!\}:\underline{[\![\hat{U}]\!]}
\end{align}
For the advective-flux we can refer to upwinding as
\begin{align}
\{\!\{\phi F(U)\cdot\nv\}\!\}_{Up} = \frac{1}{2}\left[\phi^+F(U^+)\cdot\nv+\phi^-F(U^-)\cdot\nv\right]\text{.}
\end{align}	
Finally a complete SIP-DG-scheme over $\mathcal{T}_h$ is obtained by allowing discontinuous cell-transitions, performing integration and integration by parts on each cell. If we allow alternation in the usage of the numerical flux function $\mathcal{F}$ we obtain the SIP-DG scheme in known form (\ref{Eq:22DGDiscretization}).

\section{Numerical Convergence of the Smoothed Approach}
\label{app:NumericalConvAlpha}
As mentioned in Section \ref{Sec:WellBalancedness} the numerical scheme used to handle discontinuous sediment and porosity coefficients forms the limit of a smoothed scenario. We numerically justify this by relying the smoothed porosity on smoothed step functions in one dimension, i.e. for discontinuities located at $x_0<x_1\in\R$ the smoothed porosity is obtained from
\begin{align}
\phi_\alpha(x)=\left[1-\psi(x,\alpha)\right]\phi_2+\psi(x,\alpha)\text{,}
\end{align}
where
\begin{align}
\psi(x,\alpha)=\begin{cases}
1& \text{ if } x\leq x_0-\alpha \wedge x\geq x_1+\alpha\\
-\frac{1}{4}\left(\frac{x_0-x}{\alpha}\right)^3+\frac{3}{4}\frac{x_0-x}{\alpha}+\frac{1}{2}&\text{ if } x>x_0-\alpha \wedge x< x_0+\alpha\\
1-\left(-\frac{1}{4}\left(\frac{x_1-x}{\alpha}\right)^3+\frac{3}{4}\frac{x_1-x}{\alpha}+\frac{1}{2}\right) &\text{ if } x>x_1-\alpha \wedge x< x_1+\alpha\\
0& \text{ if } x\geq x_0+\alpha \wedge x\leq x_1-\alpha\text{.}
\end{cases}
\end{align}
We can observe exemplifications for varying $\alpha$ in Figure \ref{fig:SmoothedPorosity}.
\begin{figure}[htb!]\centering
	\includegraphics[scale=0.36]{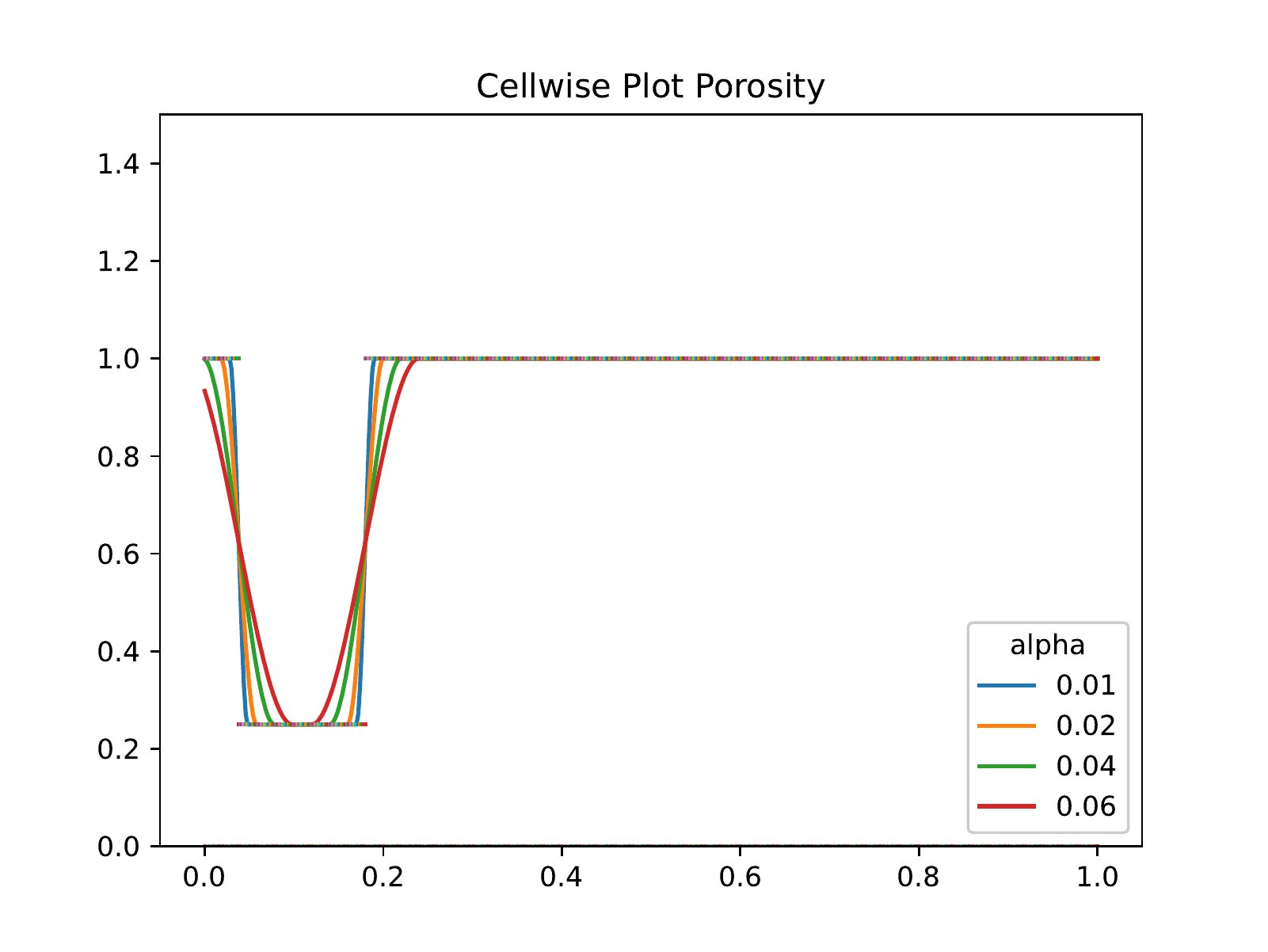}
	\caption{Smoothed Porosity for $\alpha\in\{0.01,0.02,0.04,0.06\}$}
	\label{fig:SmoothedPorosity}
\end{figure}
We now define error norms for water height $H$ and weighted velocity $uH$ as 
\begin{align}
E_H&=||H-H_\alpha||=\left(\int_0^T\int_\Om(H-H_\alpha)^2\diff x\diff t\right)^{1/2}\\
E_{uH}&=||uH-uH_\alpha||=\left(\int_0^T\int_\Om(uH-uH_\alpha)^2\diff x\diff t\right)^{1/2}\text{.}
\end{align}
From construction of the well-balanced scheme it is obvious that steady state conditions $uH=0$ for $H+z = C$ lead to zero error norms. We hence exemplifying investigate Gaussian initial conditions for the surface height $H$, i.e. $(H_0,uH_0)=(1+0.3\exp(-100(x-1/2)^2),0)$, for final time $T=0.4$ and step size $dt=\expnumber{1}{-3}$, where the discontinuities are located at $x_0=0.038$ and $x_1=0.18$.
We can observe the convergence numerically, i.e.  $U=\lim_{\alpha\rightarrow 0}U_\alpha$ for $\phi=\lim_{\alpha\rightarrow 0}\phi_\alpha$, as shown in table \ref{Tabel:Error}.
At this point, we would like to emphasise that the convergence is limited by the grid size of the mesh, hence showing the limit decrease for $\alpha\rightarrow0$ is only possible for $h_\kappa\rightarrow0$.
\begin{table}[htb!]
	\centering
	\begin{tabular}{l|c|c}
		$\alpha$ & $||H-H_\alpha||$  & $||uH-uH_\alpha||$\\ % <--
		\hline
		0.06 & 3.45587  & 8.41055\\
		0.04 & 2.05134 & 4.69693\\
		0.03 & 1.40056 &3.08506\\
		0.02 & 0.81980 & 1.69824\\
		0.01 & 0.3443 & 0.601958\\
		0.005 &0.17583  &0.23194\\
		0.001 &0.10549 &0.11108\\
	\end{tabular}
	\caption{Error Norms $E_H$ and $E_{uH}$ for decreasing $\alpha$}
	\label{Tabel:Error}
\end{table}

%%% Uncomment this section and comment out the \bibliography{references} line above to use inline references.
% \begin{thebibliography}{1}

% 	\bibitem{kour2014real}
% 	George Kour and Raid Saabne.
% 	\newblock Real-time segmentation of on-line handwritten arabic script.
% 	\newblock In {\em Frontiers in Handwriting Recognition (ICFHR), 2014 14th
% 			International Conference on}, pages 417--422. IEEE, 2014.

% 	\bibitem{kour2014fast}
% 	George Kour and Raid Saabne.
% 	\newblock Fast classification of handwritten on-line arabic characters.
% 	\newblock In {\em Soft Computing and Pattern Recognition (SoCPaR), 2014 6th
% 			International Conference of}, pages 312--318. IEEE, 2014.

% 	\bibitem{hadash2018estimate}
% 	Guy Hadash, Einat Kermany, Boaz Carmeli, Ofer Lavi, George Kour, and Alon
% 	Jacovi.
% 	\newblock Estimate and replace: A novel approach to integrating deep neural
% 	networks with existing applications.
% 	\newblock {\em arXiv preprint arXiv:1804.09028}, 2018.

% \end{thebibliography}

\end{document}